\documentclass[leqno,11pt,a4paper]{article}
\usepackage[left=3.2cm, top=2.2cm,bottom=2.5cm]{geometry}
\usepackage{amsmath,amssymb,amsthm,mathrsfs,dsfont,calc,graphicx}
\usepackage[british]{babel}

\newtheorem{theorem}{Theorem}[section]

\newtheorem{proposition}[theorem]{Proposition}
\newtheorem{lemma}[theorem]{Lemma}
\newtheorem{corol}[theorem]{Corollary}
\newtheorem{remark}{Remark}

\newcommand{\V}{\operatorname{Var}}

\newcommand{\dist}{{\operatorname{dist}}}
\newcommand{\dint}{{\operatorname{d}}}

\def\R{{\mathbb{R}}}
\def\S{{\mathbb{S}}}

\def\E{{\mathbb{E}}}

\def\N{\mathbb{N}}
\def\P{{\mathbb{P}}}
\def\1{\mbox{{\rm 1 \hskip-1.4ex I}}}

\def\Y{{\mathbb{Y}}}

\def\cY{{\mathcal Y}}

\begin{document}

\title{\textbf{The scaling limit of Poisson-driven\\ order statistics with applications\\ in geometric probability}}

\author{Matthias Schulte\footnotemark[1]\, and Christoph Th\"ale\footnotemark[2]}

\date{}
\renewcommand{\thefootnote}{\fnsymbol{footnote}}
\footnotetext[1]{
Institut f\"ur Mathematik der Universit\"at Osnabr\"uck, Albrechtstra\ss e 28a, 49076 Osnabr\"uck, Germany. E-mail: matthias.schulte@uni-osnabrueck.de}

\footnotetext[2]{
Institut f\"ur Mathematik der Universit\"at Osnabr\"uck, Albrechtstra\ss e 28a, 49076 Osnabr\"uck, Germany. E-mail: christoph.thaele@uni-osnabrueck.de}

\maketitle

\begin{abstract}
Let $\eta_t$ be a Poisson point process of intensity $t\geq 1$ on some state space $\Y$ and $f$ be a non-negative symmetric function on $\Y^k$ for some $k\geq 1$. Applying $f$ to all $k$-tuples of distinct points of $\eta_t$ generates a point process $\xi_t$ on the positive real-half axis. The scaling limit of $\xi_t$ as $t$ tends to infinity is shown to be a Poisson point process with explicitly known intensity measure. From this, a limit theorem for the the $m$-th smallest point of $\xi_t$ is concluded. This is strengthened by providing a rate of convergence. The technical background includes Wiener-It\^o chaos decompositions and the Malliavin calculus of variations on the Poisson space as well as the Chen-Stein method for Poisson approximation. The general result is accompanied by a number of examples from  geometric probability and stochastic geometry, such as Poisson $k$-flats, Poisson random polytopes, random geometric graphs and random simplices. They are obtained by combining the general limit theorem with tools from convex and integral geometry.
\end{abstract}
\begin{flushleft}
\textbf{Key words:} Chen-Stein method, extreme values, geometric probability, integral geometry, limit theorems, Malliavin calculus, order statistic, Poisson flats, Poisson process approximation, Poisson space, scaling limit, stochastic geometry, U-statistics, Wiener-It\^o chaos\\
\textbf{MSC (2010):} Primary: 60F17, 60D05, 62G32; Secondary: 60G55, 60H07
\end{flushleft}



\section{Introduction and main result}

In the paper \cite{GJ} by Grimmett and Janson, the authors consider the areas of all triangles formed by a fixed number of i.i.d. random points in a (convex) planar domain. They show that after re-scaling the distribution of the smallest triangle area converges to an exponential distribution and, moreover, that the entire collection of all triangle areas converges to a homogeneous Poisson point process on the positive real-half axis as the number of points gets large.

The purpose of the current paper is to establish a framework, which allows to deal with considerably more general situations and can be applied to a broad class of examples, including higher-dimensional versions of the main result from \cite{GJ} mentioned above. We also replace the fixed number of random points by a Poisson point process (with a possibly infinite number of points), making thereby available the powerful Wiener-It\^o chaos decomposition and the Malliavin calculus of variations for Poisson functionals.

We are now going to discuss our main result and its framework in detail. To this end, fix some Borel measurable space $(\Y,\cY)$ with a non-atomic $\sigma$-finite measure $\lambda$. By $\eta_t$ we denote a Poisson point process on $\Y$ with intensity measure $\lambda_t=t\lambda$ and $\eta_{t,\neq}^{k}$, $k\geq 1$, stands for the set of all $k$-tuples of distinct points of $\eta_t$. (As usual in point process theory, a point process is a random measure, which is -- by abuse of notation -- identified with its support, so that $y\in\eta_t$ means that $y\in\Y$ is charged by the random measure $\eta_t$.) Let further $f:\Y^k\rightarrow\R$ be a non-negative measurable function that is invariant under permutations of the arguments and satisfies 
\begin{equation}\label{eq:conditionf}
\lambda^k(f^{-1}([0,x]))<\infty \text{ for all } x>0.
\end{equation}
The Poisson point process $\eta_t$ and the function $f$ induce a collection of points
$$ \xi_t=\{f(y_1,\ldots,y_k):(y_1,\ldots,y_k)\in\eta_{t,\neq}^k\}$$
on the positive real half-axis $\R_+$. Because of the symmetry of $f$, every $f(y_1,\hdots,y_k)$ also occurs for permutations of the argument $(y_1,\hdots,y_k)$. However, we count the point $f(y_1,\hdots,y_k)$ for every subset $\{y_1,\hdots,y_k\}\subset \eta_t$ only once. The collection $\xi_t$ might still have multiple points if there are several subsets having the same value under $f$. By \eqref{eq:conditionf}, $\xi_t$ is locally finite (and vice versa), whence $\xi_t$ is a point process on the half-line $\R_+$. 

 We order the points of $\xi_t$ from the left to the right with respect to the natural ordering on $\R_+$ and denote by $F^{(m)}_t$ the distance of the $m$-th point of $\xi_t$ to the origin, i.e. $F_t^{(m)}$ is the $m$-th order statistic of $f$ applied to $\eta_{t,\neq}^k$. (We put $F^{(m)}_t=+\infty$, if $\xi_t$ has less than $m$ points.) In the poissonized version of the case considered in \cite{GJ} and described at the beginning, $\Y=\R^2$, $k=3$ and $f(y_1,y_2,y_3)$ is the area of the triangle with vertices $y_1$, $y_2$ and $y_3$.

For $\gamma>0$, $t\geq 1$ and $x>0$, we denote by $\alpha_t(x)$ the mean number of $k$-tuples $(y_1,\hdots,y_k)$ of $\eta_{t,\neq}^k$ for which $f(y_1,\hdots,y_k)\leq x t^{-\gamma}$, i.e.
\begin{equation}\label{eq:alphatdef}
\begin{split}
\alpha_t(x) &= {1\over k!}\; \E \sum_{(y_1,\hdots,y_k)\in\eta_{t,\neq}^k} \1(f(y_1,\hdots,y_k)\leq x t^{-\gamma})\\
&=\frac{1}{k!}\int\limits_{\Y^k} \1(f(y_1,\hdots,y_k)\leq x t^{-\gamma})\;\lambda_t^k(\dint(y_1,\hdots,y_k)),
\end{split}
\end{equation}
where Campbell's Theorem for point processes is used to obtain the equality. Let us also introduce
\begin{equation}\label{eq:rtdef}
r_t(x)=\sup_{\substack{y_1,\hdots,y_{k-j}\in\Y\\ 1\leq j\leq k-1}}\lambda_t^j(\{(\hat{y}_1,\hdots,\hat{y}_j)\in\Y^j: f(\hat{y}_1,\hdots, \hat{y}_j,y_1,\hdots,y_{k-j})\leq x t^{-\gamma}\}),
\end{equation}
which plays a crucial r\^ole in the locality condition below. We are now prepared to present our main result.

\begin{theorem}\label{thm:general}
Let $\alpha_t(x)$ and $r_t(x)$ as in \eqref{eq:alphatdef} and \eqref{eq:rtdef} for some $\gamma>0$. Assume that there are constants $\beta,\tau>0$ such that
\begin{equation}\label{eq:alpha}
\lim\limits_{t\rightarrow\infty}\alpha_t(x)=\beta x^{\tau},\qquad x>0
\end{equation}
and that the locality condition
\begin{equation}\label{eq:condition}
\lim\limits_{t\rightarrow\infty}r_t(x)=0,\qquad x>0
\end{equation}
holds.
\begin{itemize}
 \item [a)] The scaling limit as $t\rightarrow\infty$ of the point processes $t^\gamma \xi_t$ is a Poisson point process $\xi$ on $\R_+$ with intensity measure
$$\nu(B)=\beta\tau\int\limits_B u^{\tau-1}\;\dint u,\qquad B\subset \R_+\;{\rm Borel},$$ i.e. $t^\gamma\xi_t$ converges as $t\rightarrow\infty$ in distribution to $\xi$.
\item [b)] For every $x>0$ there is a constant $C_{f,x}>0$ depending on $f$ and $x$ such that all order statistics $F_t^{(m)}$, $m\geq 1$, satisfy
$$\left|\P(t^\gamma F_t^{(m)}>x)-e^{-\beta x^{\tau}}\sum_{i=0}^{m-1}\frac{(\beta x^{\tau})^i}{i!}\right|\leq |\beta x^\tau-\alpha_t(x)|+C_{f,x}\sqrt{r_t(x)}$$
for all $t\geq 1$.
\end{itemize}
\end{theorem}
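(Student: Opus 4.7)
\medskip
The strategy rests on the identity $\{t^\gamma F_t^{(m)}>x\}=\{N_t(x)\leq m-1\}$, where
$$N_t(x)=\frac{1}{k!}\sum_{(y_1,\dots,y_k)\in\eta_{t,\neq}^k}\1\bigl(f(y_1,\dots,y_k)\leq xt^{-\gamma}\bigr)$$
counts the points of $\xi_t$ in $[0,xt^{-\gamma}]$. By Campbell's theorem, $\E N_t(x)=\alpha_t(x)$, so the task for part (b) splits into (i) bounding the total variation distance between $N_t(x)$ and a Poisson random variable $Z_t\sim\mathrm{Poi}(\alpha_t(x))$, and (ii) comparing $Z_t$ to the target Poisson variable $Z\sim\mathrm{Poi}(\beta x^\tau)$. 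Item (ii) is classical and yields $d_{TV}(Z_t,Z)\leq|\alpha_t(x)-\beta x^\tau|$, producing the first summand in the asserted bound after evaluating cumulative distribution functions at $m-1$.

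\medskip
For (i) the plan is to apply Poisson approximation for the symmetric Poisson $U$-statistic $N_t(x)$, whose kernel is the $\{0,1\}$-valued indicator $\1(f\leq xt^{-\gamma})$. Either the classical Chen-Stein method with a neighbourhood of dependence consisting of all $k$-subsets of $\eta_t$ sharing at least one argument with a given one, or the Malliavin-Stein approach on the Poisson space (in the spirit of Peccati and Reitzner-Schulte), delivers a bound on $d_{TV}(N_t(x),Z_t)$ expressed through integrals
$$\int_{\Y^{2k-j}}\1\bigl(f(y_1,\dots,y_k)\leq xt^{-\gamma}\bigr)\,\1\bigl(f(y'_1,\dots,y'_k)\leq xt^{-\gamma}\bigr)\,\lambda_t^{2k-j}(\dint\cdot)$$
for $j=1,\dots,k-1$, corresponding to pairs of $k$-subsets sharing exactly $j$ arguments. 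Fixing the first $k$-tuple and, for each choice of $j$ of its arguments as the shared block, integrating out the $k-j$ remaining arguments of the second $k$-tuple, the very definition of $r_t$ produces a factor $r_t(x)$; the residual $k$-fold integral over the first tuple is a constant multiple of $\alpha_t(x)$, which is itself uniformly bounded in $t$ by \eqref{eq:alpha}. Standard $L^2$-type manipulations (Cauchy-Schwarz applied to the first- and second-order Malliavin difference operators of $N_t(x)$) then convert the resulting estimate into the claimed form $C_{f,x}\sqrt{r_t(x)}$.

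\medskip
Part (a) is obtained from the argument of (b) via Kallenberg's theorem on convergence of simple point processes on $\R_+$: since the limit $\xi$ is a simple Poisson process, it suffices to verify that $\E[(t^\gamma\xi_t)(B)]\to\nu(B)$ and $\P((t^\gamma\xi_t)(B)=0)\to e^{-\nu(B)}$ for every $B$ in a generating class of bounded Borel sets, e.g.\ finite unions of intervals in $\R_+$. The first convergence is precisely assumption \eqref{eq:alpha}, whereas the second follows by repeating the Chen-Stein argument of step (i) with $\1(f(\cdot)\in t^{-\gamma}B)$ in place of $\1(f(\cdot)\leq xt^{-\gamma})$, the locality condition \eqref{eq:condition} driving the approximation error to zero. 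The principal technical difficulty lies entirely in step (i): one must carry out the Chen-Stein / Malliavin-Stein bookkeeping so that the combinatorial structure of symmetric $k$-tuples collapses precisely into the quantities $\alpha_t(x)$ and $r_t(x)$, with all dependence on $(\Y,\cY,\lambda)$ absorbed into the constant $C_{f,x}$.
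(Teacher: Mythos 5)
Your proposal follows essentially the same route as the paper: reduce to the counting $U$-statistic $N_t(x)=U_{[0,xt^{-\gamma}]}$, prove a total-variation Poisson approximation via the Malliavin--Chen--Stein bound with the error controlled by integrals over pairs of $k$-tuples sharing arguments (which collapse to $\alpha_t(x)\,r_t(x)$, giving $\sqrt{r_t(x)}$ after Cauchy--Schwarz), and deduce the scaling limit from Kallenberg's criterion by applying the same approximation to $\1(f\in t^{-\gamma}V)$ for finite unions of intervals. The only caveat is that the technical core — the combinatorial bookkeeping of the product formula for the multiple Wiener--It\^o integrals (the paper's Lemma 4.2 and the bounds on the four terms in Peccati's inequality) — is announced rather than carried out, but the plan is the correct one.
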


The Poisson point process $\xi$ in Theorem \ref{thm:general} a) has the power law intensity function $\beta\tau u^{\tau-1}$ and is also known as \textit{Weibull process} in the literature, because the distance from the origin to the first point of $\xi$ follows a Weibull distribution with survival function $e^{-\beta x^\tau}$; cf. \cite{Miller}. This can be rephrased by saying that the re-scaled minimum functional $t^\gamma F_t^{(1)}$ with 
\begin{equation}\label{eq:F1}
F_t^{(1)}=\min_{(y_1,\hdots,y_k)\in\eta^k_{t,\neq}}f(y_1,\hdots ,y_k)
\end{equation}
being the first order statistic is asymptotically Weibull distributed as $t\rightarrow\infty$. This is of special interest for many applications as considered below. We remark that the point process $\xi$ is homogeneous (this means that its intensity measure is a constant multiple of the standard Lebesgue measure on $\R_+$) with intensity $\beta>0$ if and only if $\tau=1$, in which case the mentioned Weibull distribution is nothing than an exponential distribution with parameter $\beta$.

The Wiener-It\^o chaos decomposition as outlined by Last and Penrose in \cite{LastPenrose2011} has stimulated a number of applications in geometric probability and stochastic geometry, that were concerned with \textit{central} limit theorems; cf. \cite{DFR11,LRPeccati,RS11,S11}. The current paper turns to point process convergence, \textit{non-central} limit theorems and extreme values and continues the works \cite{LRPeccati,RS11} by Lachieze-Rey, Peccati, Reitzner and Schulte, where so-called Poisson U-statistics have been investigated. The Wiener-It\^o chaos decomposition of Poisson U-statistics will also be in the background of the results obtained here, since we investigate an associated auxiliary Poisson U-statistic rather than the original problem. For this auxiliary functional we first prove a Poisson limit theorem, Proposition \ref{prop:PoissonApprox} below, with a rate measured by the total variation distance. The main tool for deriving this result is the remarkable paper \cite{Peccati11} by Peccati, who combined the Malliavin calculus of variations for Poisson functionals with the Chen-Stein method for Poisson approximation. Background material for these techniques are the paper \cite{NualartVives90} by Nualart and Vives and the monograph \cite{BarbourHolstJanson92} by Barbour, Holst and Janson. The Poisson convergence in turn implies our non-central limit theorem, Theorem \ref{thm:general} b) above. The full scaling limit is derived by general point process theory as described in Chapter 16 of Kallenberg's book \cite{Kallenberg}. Other references dealing with the Poisson point process approximation are the papers \cite{Barbour88,BarbourBrown} by Barbour and Brown, Janson's classical work \cite{Janson} and once more \cite{BarbourHolstJanson92}. 

In our examples presented in Section \ref{secEXAMPLES}, we will apply Theorem \ref{thm:general} to problems having a geometric flavor. These are:
\begin{itemize}
 \item[1.] \textit{Non-intersecting Poisson $k$-flats in $\R^d$ ($k<d/2$):} Here, we consider a compact convex set $W\subset\R^d$ and the distances between all pairs of distinct $k$-flats hitting $W$;
 \item[2.] \textit{Intersecting Poisson $k$-flats in $\R^d$ ($k\geq d/2$):} Given the $d$-dimensional unit ball $B^d$, we investigate the $j$-th intrinsic volumes of the intersection of $B^d$ with the lower-dimensional flats of the intersection process of the Poisson $k$-flats;
 \item[3.] \textit{Poisson polytope on the unit sphere:} The random polytope, which is given by the convex hull of a Poisson point process on the $(d-1)$-dimensional unit sphere is considered, in particular the length of its shortest edge;
 \item[4.] \textit{Random geometric graphs:} Here, a Poisson point process in a compact convex set $W\subset\R^d$ is given and the edge lengths of a family of random geometric graphs constructed out of these points is investigated;
 \item[5.] \textit{Random simplices I:} We consider the volumes of all $d$-dimensional random simplices that can be formed by all $(d+1)$-tuples of distinct points of a Poisson point process in a compact convex set $W\subset\R^d$;
 \item[6.] \textit{Random simplices II:} Given a Poisson process of hyperplanes hitting a compact convex set $W\subset\R^d$, we deal with the volumes of all $d$-dimensional random simplices that can be formed by any $d+1$ of these hyperplanes.
\end{itemize}
In order to apply Theorem \ref{thm:general} to these concrete situations, we will have to identify $\gamma$, $\beta$ and $\tau$ that appear in the limit of \eqref{eq:alphatdef} and to check the locality condition \eqref{eq:condition}. Typically, to check the locality condition is more or less a routine task, whereas for determining the normalizing constants one often needs more delicate arguments adapted to the particular examples. The exact computations rely in our cases on the classical Crofton formula, Steiner's formula and its relatives from convex geometry and on an integral-geometric transformation of Blaschke-Petkantschin type and we refer the reader to \cite{SW} for these geometric tools.

Let us finally mention some other closely related work. One of the classical references for extreme values and the point processes connection is Resnick's monograph \cite{Resnick}. A Poisson process limit theorem for the order statistics of i.i.d. random variables is the content of the paper \cite{Miller} of Miller, where non-homogeneous Poisson point processes on the real half-axis similar to those in our Theorem \ref{thm:general} show up in the limit. Lao and Mayer have studied in \cite{LM07} so-called U-max statistics. They correspond to our functional (\ref{eq:F1}) with the minimum replaced by the maximum. Moreover, the binomial point process has been used instead of the Poisson point process considered here. The diameter of such a random sample is the content of the paper \cite{MM07} by Mayer and Molchanov; see also the related work \cite{Henze82,Henze96} by Henze and Klein and again \cite{LM07}. The minimal distance of points in a binomial point process has been investigated in the classical paper \cite{SilvermanBrown} by Silverman and Brown, where a result similar to our Theorem \ref{thm:general} b) without rate of convergence has been obtained; see also \cite{KMT}.

The paper is structured as follows: In the next section, we present our Examples 1--6 mentioned above in full detail. In Section \ref{secCHAOS} we recall some basic facts about chaos decompositions, the Malliavin calculus of variations and a result for the Poisson approximation on the Poisson space that is needed in our further arguments. The proof of our general result, Theorem \ref{thm:general}, is the content of the final Section \ref{secPOISSONAPPROXIMATION}.

\section{Applications in geometric probability}\label{secEXAMPLES}

Let us fix some general notation before turning to the examples. For a (full dimensional) set $W\subset{\R}^d$, $d\geq 1$, we denote by $[W]_k$ the set of all $k$-dimensional affine subspaces of $\R^d$ that have non-empty intersection with $W$. In integrals of the type $\int_{[W]_k}f(E)\; dE$ with a real-valued measurable function $f$ on $[W]_k$, $dE$ stands for integration with respect to the Haar measure on $[W]_k$ with a normalization as in \cite{SW}. If in addition $W$ is convex, we denote by $V_j(W)$, $0\leq j\leq d$, the intrinsic volume of order $j$ in the sense of classical convex geometry. In particular, $V_d(W)$ is the volume of $W$, $2V_{d-1}(W)$ its surface area, $V_1(W)$ a constant multiple of its mean width and $V_0(W)=1$. Further, $B^d_r(y)$ stands for the $d$-dimensional ball with radius $r>0$ and center $y\in\R^d$, $B^d_r=B^d_r(0)$ and $B^d=B_1^d$. Moreover, $\kappa_d=V_d(B^d)$. The Euclidean distance between two points $y_1,y_2\in\R^d$ is $\dist(y_1,y_2)$ and for $W$ as above we put $\dist(y,W)=\inf\{\dist(y,w):w\in W\}$ for $y\in\R^d$. For $r>0$ let us define the outer and inner parallel sets $W_r=\{y\in\R^d: \dist(y,W)\leq r\}$ and $W_r^-=\{y\in W: \dist(y,\partial W)\geq r\}$, where $\partial W$ denotes the boundary of $W$.

\paragraph{1. Non-intersecting Poisson $k$-flats.} Poisson point processes on the space of $k$-dimensional affine subspaces of $\R^d$ are a classical topic studied in stochastic geometry; see \cite{BaumstarkLast,HugLastWeil} and also \cite{SW} and the references cited therein. To measure the `closeness' or the `denseness' the so-called proximity has been introduced in \cite{Schneider99} for the case $k<d/2$, where the flats do not intersect each other with probability one. We propose to measure such a quantity by the minimal distance of the flats hitting a convex test set or, more generally, by the order statistics induced by all distances between two distinct flats. As we assume stationarity of the Poisson flats, our result does not depend on the position of the test set in space. Moreover, we also assume isotropy in order to make available tools from integral geometry.

More formally, let $\eta_t$ be a stationary and isotropic process of Poisson $k$-flats in $\R^d$ of intensity $t\geq 1$ such that $k<d/2$ and $d\geq 1$. Fix a compact and convex test set $W\subset\R^d$ with volume $V_d(W)>0$ and define the distance between two $k$-flats $E$ and $F$ hitting $W$ as $$\dist_W(E,F)=\min_{y_1\in E\cap W,\;y_2\in F\cap W}\dist(y_1,y_2).$$ 
Note that $\dist_W(E,F)$ measures the distance of $E$ and $F$ within $W$ and not the usual distance between the flats. We find this approach more natural as it can happen that two flats have a large distance in $W$, but become close to each other far away from the test set. The Poisson $k$-flat process $\eta_t$ and $\dist_W(\cdot,\cdot)$ generate a point process 
$$\xi_t=\{\dist_W(E,F):(E,F)\in\eta_{t,\neq}^2\;{\rm with}\; E,F\in[W]_k\}$$ on the real half axis. By $D^{(m)}_t$, we denote the distance of the $m$-th point of $\xi_t$ to the origin, which is the $m$-th smallest distance between two $k$-flats hitting $W$.

\begin{theorem}\label{thm:proximity} 
Define $$\beta=\frac{{d-k \choose k}\kappa_{d-k}^2}{2{d \choose k} \kappa_d}V_{d}(W).$$
\begin{itemize}
\item [a)] The re-scaled point processes $t^{2/(d-2k)}\xi_t$ converge as $t\rightarrow\infty$ in distribution to a Poisson point process whose intensity measure is $$B\mapsto\beta (d-2k)\int\limits_B u^{d-2k-1}\; \dint u,\qquad B\subset \R_+\;{\rm Borel}.$$
\item [b)] For every $x>0$ there is a constant $C>0$ depending on $x$, $d$, $k$ and $W$ such that
\begin{eqnarray*}
\left|\P(t^{2/(d-2k)}D_t^{(m)}>x)-e^{-\beta x^{d-2k}}\sum_{i=0}^{m-1}\frac{(\beta x^{d-2k})^i}{i!} \right| \leq C t^{-\min\{2/(d-2k),1/2\}}
\end{eqnarray*}
for $t\geq 1$ and $m\geq 1$. In particular, $t^{2/(d-2k)}D_t^{(1)}$ converges as $t\rightarrow\infty$ in distribution to a Weibull distributed random variable with survival function $e^{-\beta x^{d-2k}}$.
\end{itemize}
\end{theorem}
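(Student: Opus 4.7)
The plan is to apply Theorem~\ref{thm:general} with state space $\Y=A(d,k)$ (the space of affine $k$-flats in $\R^d$ equipped with the Haar measure $\lambda$ normalised as in \cite{SW}), U-statistic order $2$, and symmetric function $f(E,F)=\dist_W(E,F)$ when $E,F\in[W]_k$ and $f(E,F)=+\infty$ otherwise. The parameters have to be identified as $\tau=d-2k$, $\gamma=2/(d-2k)$, and $\beta$ as in the statement; the value of $\gamma$ is forced by requiring the scaling $t^2(xt^{-\gamma})^{d-2k}$ to be of order one.

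The core computation is the integral-geometric evaluation of
$$J(r):=\int_{[W]_k}\int_{[W]_k}\1(\dist_W(E,F)\le r)\,dE\,dF.$$
I would parameterise a generic skew pair $(E,F)$ by two directions $L_1,L_2\in G(d,k)$ and offsets $p_i\in L_i^\perp$, and use that $\dist(E,F)=|P_M(p_2-p_1)|$ for $M:=(L_1+L_2)^\perp$ of dimension $d-2k$. Polar coordinates on $M$ produce a factor $u^{d-2k-1}$ at distance $u$; the Grassmannian integral over the relative position of $L_1,L_2$ supplies the combinatorial constant $\binom{d-k}{k}\kappa_{d-k}^2/\binom{d}{k}\kappa_d$ (this is the Blaschke--Petkantschin-type piece, cf.\ \cite{SW}); and integration of the midpoint of the common perpendicular segment over $W$, combined with a boundary correction of relative order $O(r)$ accounting for the discrepancy between $\dist$ and $\dist_W$ near $\partial W$, yields the factor $V_d(W)$. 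The outcome is
$$J(r)=2\beta\,r^{d-2k}+O(r^{d-2k+1}),\qquad r\to 0,$$
so that $\alpha_t(x)=\tfrac12 t^2 J(xt^{-\gamma})=\beta x^{d-2k}+O(t^{-2/(d-2k)})$, which establishes \eqref{eq:alpha} with an explicit rate.

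For the locality condition \eqref{eq:condition} only $j=1$ is relevant, and a one-flat version of the same argument gives
$$r_t(x)=t\sup_{E\in[W]_k}\lambda(\{F\in[W]_k:\dist_W(E,F)\le xt^{-\gamma}\})\le C(d,k,W)\,x^{d-2k}\,t^{1-\gamma(d-2k)}=O(t^{-1}),$$
so that $\sqrt{r_t(x)}=O(t^{-1/2})$. Part~(a) of Theorem~\ref{thm:proximity} is then immediate from Theorem~\ref{thm:general}(a), and part~(b) follows from Theorem~\ref{thm:general}(b) by combining the two error contributions $|\beta x^\tau-\alpha_t(x)|=O(t^{-2/(d-2k)})$ and $\sqrt{r_t(x)}=O(t^{-1/2})$ into $O(t^{-\min\{2/(d-2k),\,1/2\}})$. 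The main technical obstacle is the precise Blaschke--Petkantschin-type decomposition for pairs of skew $k$-flats and the control of the boundary error in $J(r)$ induced by the switch from $\dist$ to $\dist_W$; once that identity is in hand, the locality estimate and the final assembly are routine.
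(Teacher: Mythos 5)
Your proposal follows the same overall strategy as the paper (identify $\gamma=2/(d-2k)$, $\tau=d-2k$, verify \eqref{eq:alpha} with a rate and \eqref{eq:condition}, then invoke Theorem \ref{thm:general}), and your locality estimate and final assembly agree with the paper's. The genuine difference is in how the core integral $J(r)$ is evaluated. You propose a direct Blaschke--Petkantschin-type parameterisation of a skew pair $(E,F)$ by directions, offsets and the common perpendicular, extracting $u^{d-2k-1}$ from polar coordinates on the $(d-2k)$-dimensional orthogonal complement and the constant from a Grassmannian integral. The paper instead evaluates the inner integral over $F$ by a single application of Crofton's formula, rewriting $\{\dist_W(E,F)\le r\}$ as $\{F\cap((E\cap W)_r\cap W)\neq\emptyset\}$, expands $V_{d-k}((E\cap W)_r)$ by Steiner's formula, and then integrates over $E$ with Crofton again; the sandwich $(E\cap W^-_r)_r\subset (E\cap W)_r\cap W\subset (E\cap W)_r$ handles the boundary effect and delivers the explicit error $O(t^{-2/(d-2k)})$. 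The paper's route buys you everything off the shelf: no new Jacobian computation, no separate Grassmannian integral, and the $\dist$-versus-$\dist_W$ discrepancy is absorbed automatically by the inner parallel body rather than requiring a bespoke $O(r^{d-2k+1})$ correction. Your route is conceptually transparent about where each factor of $\beta$ comes from, but the two steps you flag as the main obstacle --- the Jacobian of the skew-pair decomposition (with verification that the Grassmannian integral really produces $\binom{d-k}{k}\kappa_{d-k}^2/(\binom{d}{k}\kappa_d)$) and the quantitative boundary correction --- are exactly the content of the proof and are left unproven; carrying them out would likely amount to rederiving the Crofton--Steiner identities the paper uses directly. One further small point: your bound $r_t(x)\le C x^{d-2k}t^{-1}$ keeps only the leading Steiner term, which is valid for $xt^{-\gamma}$ small; for the claim uniformly in $t\ge 1$ you need the lower-order terms as well (the paper keeps $x^{d-k}+x^{d-2k}$), though this is harmless since $C$ may depend on $x$.
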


\begin{proof}
In order to apply Theorem \ref{thm:general}, we need to compute the limit of
\begin{equation}\label{expectationproximity}
\alpha_t(x)=\frac{t^2}{2}\int\limits_{[W]_k}\int\limits_{[W]_k}\1(\dist_W(E,F)\leq xt^{-\gamma})\;\dint F \; \dint E
\end{equation}
as $t\rightarrow\infty$. Applying Crofton's formula \cite[Thm. 5.1.1]{SW}, we obtain
\begin{equation}\label{eq:croftonproximity}
\begin{split}
\int\limits_{[W]_k}\1(\dist_W(E,F)\leq xt^{-\gamma})\;\dint F & =  \int\limits_{[W]_k}\1(((E\cap W)_{xt^{-\gamma}}\cap W)\cap F\neq\emptyset)\;\dint F\\
&= {\kappa_k\kappa_{d-k}\over{d\choose k}\kappa_d}V_{d-k}((E\cap W)_{xt^{-\gamma}}\cap W).
\end{split}
\end{equation}
Observe now that
\begin{equation}\label{eq:boundExp1}
V_{d-k}((E\cap W^-_{xt^{-\gamma}})_{xt^{-\gamma}})\leq V_{d-k}((E\cap W)_{xt^{-\gamma}}\cap W)\leq V_{d-k}((E\cap W)_{xt^{-\gamma}})
\end{equation}
and that a version of Steiner's formula \cite[Thm. 14.2.4]{SW} leads to
\begin{eqnarray}\label{eq:Steiner1}
 V_{d-k}((E\cap W^-_{xt^{-\gamma}})_{xt^{-\gamma}}) & = & \sum_{j=0}^k \frac{\kappa_{d-j}}{\kappa_k}{d-j \choose k}V_j(E\cap W^-_{xt^{-\gamma}}) (xt^{-\gamma})^{d-k-j},\\ \label{eq:Steiner2}
 V_{d-k}((E\cap W)_{xt^{-\gamma}}) & = & \sum_{j=0}^k \frac{\kappa_{d-j}}{\kappa_k}{d-j \choose k}V_j(E\cap W) (xt^{-\gamma})^{d-k-j}.
\end{eqnarray}
Combining \eqref{expectationproximity} with \eqref{eq:boundExp1}, \eqref{eq:Steiner1} and \eqref{eq:Steiner2} and using once more Crofton's formula yield
$$\sum_{j=0} ^k \frac{{k \choose j} {d-j \choose k}}{2{d \choose k}{d \choose k-j}}\frac{\kappa_k \kappa_{d-k}\kappa_{d-j}\kappa_{d+j-k}}{\kappa_j\kappa_d^2} V_{d-k+j}(W^-_{xt^{-\gamma}})t^2(xt^{-\gamma})^{d-k-j}$$
$$\leq \alpha_t(x)\leq \sum_{j=0} ^k \frac{{k \choose j} {d-j \choose k}}{2{d \choose k}{d \choose k-j}}\frac{\kappa_k \kappa_{d-k}\kappa_{d-j}\kappa_{d+j-k}}{\kappa_j\kappa_d^2} V_{d-k+j}(W)t^2(xt^{-\gamma})^{d-k-j}.$$
This together with the monotonicity of the intrinsic volumes leads to the inequality
\begin{eqnarray*}
&&\left|\alpha_t(x)- \frac{{d-k \choose k}\kappa_{d-k}^2}{2{d \choose k}\kappa_d}V_{d}(W)t^2(xt^{-\gamma})^{d-2k}\right|\\ & \leq & \frac{{d-k \choose k}\kappa_{d-k}^2}{2{d \choose k}\kappa_d} \left(V_{d}(W)-V_{d}(W^-_{xt^{-\gamma}})\right)\\ && + \sum_{j=0}^{k-1} \frac{{k \choose j} {d-j \choose k}}{2{d \choose k}{d \choose k-j}}\frac{\kappa_k \kappa_{d-k}\kappa_{d-j}\kappa_{d+j-k}}{\kappa_j\kappa_d^2} V_{d-k+j}(W)t^2(xt^{-\gamma})^{d-k-j}.
\end{eqnarray*}
Again, by Steiner's formula it follows that 
$$|V_{d}(W)-V_{d}(W^-_{xt^{-\gamma}})|\leq \sum_{j=0}^{d-1}\kappa_{d-j}V_j(W)(xt^{-\gamma})^{d-j}.$$
Choosing $\gamma=2/(d-2k)$, we see that
\begin{equation}\label{eq:inequalityExp1}
\lim_{t\rightarrow\infty}\alpha_t(x)=\beta x^{d-2k} \quad \text{and} \quad |\alpha_t(x)-\beta x^{d-2k}|\leq c_1(k,d,W) (x+x^d)t^{-2/(d-2k)}
\end{equation}
with a suitable constant $c_1(k,d,W)>0$ depending on $k$, $d$ and $W$.
Using (\ref{eq:croftonproximity}) and the monotonicity of $V_{d-k}$, we find that
\begin{eqnarray*}
r_t(x) & = & \sup_{E\in[W]_k}t\int\limits_{[W]_k}\1(\dist_W(E,F)\leq xt^{-\gamma})\;\dint F\\
&=& \sup_{E\in[W]_k}t{\kappa_k\kappa_{d-k}\over{d\choose k}\kappa_d}V_{d-k}((E\cap W)_{xt^{-\gamma}}\cap W)\\
& \leq & \sup_{E\in[W]_k}t{\kappa_k\kappa_{d-k}\over{d\choose k}\kappa_d}V_{d-k}((E\cap W)_{xt^{-\gamma}}).
\end{eqnarray*}
Combining this once more with \cite[Thm. 14.2.4]{SW}, we obtain
\begin{equation}\label{eq:rtBSP1} \begin{split}
r_t(x) & \leq  t{\kappa_k\kappa_{d-k}\over{d\choose k}\kappa_d}\sum_{j=0}^{k}(xt^{-\gamma})^{d-k-j}{d-j \choose k}\frac{\kappa_{d-j}}{\kappa_k}\sup_{E\in[W]_k}V_j(E\cap W)\\ 
&\leq  c_2(k,d,W)t((xt^{-\gamma})^{d-k}+(xt^{-\gamma})^{d-2k}) \end{split}
\end{equation}
with a suitable constant $c_2(k,d,W)>0$. Substituting $\gamma=2/(d-2k)$ in \eqref{eq:rtBSP1} leads to
\begin{equation}\label{eq:helpxxx}
r_t(x)\leq c_2(k,d,W)(x^{d-k}+x^{d-2k})t^{-1} 
\end{equation}
for $t\geq 1$. Now Theorem \ref{thm:general} can be applied, which completes the proof.
\end{proof}
\begin{remark}\rm
A glance at \eqref{eq:inequalityExp1} and \eqref{eq:helpxxx} shows that the constant $C$ in Theorem \ref{thm:proximity} b) can be chosen in such a way that $C=\hat{C}(x+x^d)$ with $\hat{C}$ being independent of $x$. However, $x+x^d$ is not uniformly bounded so that we cannot take the supremum over all $x>0$ as would be of interest to get a rate of convergence measured by the usual Kolmogorov distance. A similar comment also applies to Examples 2--4 below.
\end{remark}

\paragraph{2. Intersecting Poisson $k$-flats.} As in the previous example we investigate the restriction of a stationary and isotropic Poisson $k$-flat process $\eta_t$ in $\R^d$ ($d\geq 2$) of intensity $t\geq 1$. But this time we focus on the case $k\geq d/2$ and restrict to the $d$-dimensional unit ball $B^d$ in place of a general convex observation window. By virtue of the parameter choice, these flats intersect with probability one and we can define the intersection process of $\eta_t$ of order $\ell$, where $\ell$ is such that $\ell(d-k)\leq d$. This is obtained by taking the intersection of any $\ell$-tuple of distinct $k$-flats of $\eta_t$; cf. \cite{SW}. By $V_j$ we denote again the intrinsic volume of degree $j$ and put 
$$\xi_t=\{V_j(E_1\cap\ldots\cap E_\ell\cap B^d): (E_1,\ldots,E_\ell)\in\eta_{t,\neq}^\ell \text{ and }E_1\cap\hdots\cap E_\ell\cap B^d\neq \emptyset\}$$
for $j\in\{1,\ldots,d-\ell(d-k)\}$. Let further $V_{j,t}^{(m)}$ be the distance of the $m$-th smallest element of $\xi_t $ to the origin.
\begin{theorem}
Define $$\beta={\kappa_d(\ell(d-k)-1)!\over 2\kappa_{d-\ell(d-k)}\ell!}{d\choose
\ell(d-k)}{d-\ell(d-k)\choose j}^{-{1\over j}}\left({k!\kappa_k\over d!\kappa_d}\right)^\ell
\left({\kappa_{d-\ell(d-k)-j}\over\kappa_{d-\ell(d-k)}}\right)^{1\over
j}.$$
\begin{itemize}
 \item[a)] The point processes $t^{j\ell/2}\xi_t $ converge as $t\rightarrow\infty$ in distribution to a Poisson point process whose intensity measure is $$B\mapsto 2\beta j^{-1}\int\limits_Bu^{(2-j)/j}\;\dint u,\qquad B\subset \R_+\;{\rm Borel}.$$
 \item[b)]  For every $x>0$ there is a constant $C>0$ depending on $x$, $d$, $k$, $\ell$ and $j$ such that
$$\left|\P(t^{j\ell/2}V_{j,t}^{(m)}>x)-e^{-\beta x^{2/j}}\sum_{i=0}^{m-1} \frac{(\beta x^{2/j})^i}{i!} \right|\leq C t^{-1/2}$$
for $t\geq 1$ and $m\geq 1$. In particular, $t^{j\ell/2}V_{j,t}^{(1)}$ converges to a random variable that is Weibull distributed with survival function $e^{-\beta x^{2/j}}$.
\end{itemize}
\end{theorem}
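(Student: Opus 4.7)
The plan is to apply Theorem~\ref{thm:general} to the symmetric $\ell$-variate kernel $f(E_1,\ldots,E_\ell)=V_j(E_1\cap\cdots\cap E_\ell\cap B^d)$ on the state space $\Y=[B^d]_k$ of $k$-flats hitting $B^d$, with the restriction of the invariant flat measure from \cite{SW} playing the role of $\lambda$, and with scaling parameters $\gamma=j\ell/2$ and $\tau=2/j$. Writing $q:=d-\ell(d-k)$, the intersection $E_1\cap\cdots\cap E_\ell$ is almost surely either empty or a $q$-flat, which is the key structural fact; what remains is to verify \eqref{eq:alpha} and \eqref{eq:condition} together with explicit rates.

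To compute $\alpha_t(x)$, I would use the iterated Crofton / affine Blaschke--Petkantschin formula for flats from \cite{SW}, which converts the $\ell$-fold integral over $k$-flats into a single integral over $q$-flats weighted by an explicit combinatorial constant built out of $(\ell(d-k)-1)!$, $\binom{d}{\ell(d-k)}$ and the factor $(k!\kappa_k/(d!\kappa_d))^\ell$ already visible in the definition of $\beta$. Rotational invariance of $B^d$ then lets me parametrize every $q$-flat $F$ by its Grassmannian direction and its distance $s\in[0,1]$ to the origin, so that $F\cap B^d$ is a $q$-dimensional Euclidean ball of radius $\sqrt{1-s^2}$ and, by the formula for the intrinsic volumes of a ball,
\begin{equation*}
V_j(F\cap B^d)=\binom{q}{j}\frac{\kappa_q}{\kappa_{q-j}}(1-s^2)^{j/2}.
\end{equation*}
Solving $V_j(F\cap B^d)\le xt^{-\gamma}$ for $s$ yields a threshold $s_0=s_0(t,x)$ close to~$1$, and the disintegration of the invariant $q$-flat measure as (Grassmannian mass)$\,\times\,(d-q)\kappa_{d-q}s^{d-q-1}\,ds$ on the orthogonal complement reduces the inner integral to $\kappa_{d-q}(1-s_0^{d-q})$. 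Since $\gamma=j\ell/2$ is exactly the exponent that makes $t^\ell(t^{-\gamma})^{2/j}=1$, a Taylor expansion in the small quantity $1-s_0$ gives $\alpha_t(x)\to\beta x^{2/j}$ with absolute error of order $t^{-\ell}$, and collecting the various kappa and binomial constants reproduces the value of $\beta$ displayed in the statement.

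The locality condition \eqref{eq:condition} is handled by the same kind of integral-geometric computation. For any choice of $1\le j'\le\ell-1$ free flats in the presence of $\ell-j'$ fixed flats whose generic intersection is a $(q+j'(d-k))$-flat $G$, a second application of the iterated Crofton formula reduces the inner integral in \eqref{eq:rtdef} to one over $(d-j'(d-k))$-flats $F'$, whose further intersection with $G$ is again a generic $q$-flat; the same spherical parametrization, now applied inside $G\cap B^d$, produces an estimate of the form $t^{j'}(xt^{-\gamma})^{2/j}=O(t^{j'-\ell})$, uniform in the fixed flats, so that taking the supremum over $j'$ yields $r_t(x)=O(t^{-1})$. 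Inserting $|\alpha_t(x)-\beta x^{2/j}|=O(t^{-\ell})$ and $\sqrt{r_t(x)}=O(t^{-1/2})$ into Theorem~\ref{thm:general}~b) then gives the stated $O(t^{-1/2})$ rate, while part~a) is immediate from Theorem~\ref{thm:general}~a). The main technical obstacle is the bookkeeping behind $\beta$: the iterated Crofton constant, the normalization of the Grassmannian measure, and the $q$-ball formula must cancel just right to collapse into the rather elaborate expression displayed in the theorem.
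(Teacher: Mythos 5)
Your proposal is correct and follows essentially the same route as the paper: identify the intersection flat with a lower-dimensional ball, translate the event $V_j\le xt^{-\gamma}$ into a condition on the distance of that flat to the origin, reduce the multiple flat integral by (iterated) Crofton, Taylor-expand in the small radius $\varrho\sim(xt^{-\gamma})^{1/j}$ to get $\alpha_t(x)\to\beta x^{2/j}$ with error $O(t^{-\ell})$, and bound $r_t(x)=O(t^{-1})$ by the analogous computation with some flats fixed, before invoking Theorem~\ref{thm:general}. The only difference is cosmetic: you perform the full Crofton reduction to $q$-flats first and then disintegrate the invariant measure by distance to the origin, whereas the paper keeps the quantity as a difference $V_{\ell(d-k)}(B^d)-V_{\ell(d-k)}(B^d_{\sqrt{1-\varrho^2}})$ and uses homogeneity of intrinsic volumes.
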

\begin{proof}
In this example, $\alpha_t(x)$ is given by 
\begin{equation}\label{eqn:kflatsintersecting}
\alpha_t(x)={t^\ell\over \ell!}\int\limits_{[B^d]_k}\ldots\int\limits_{[B^d]_k}\1(0<V_j(E_1\cap\ldots\cap E_\ell\cap B^d)\leq xt^{-\gamma})\; \dint E_\ell\ldots \dint E_1.
\end{equation}
The intersection $E_1\cap\ldots\cap E_\ell\cap B^d$ is a $d-\ell(d-k)$-dimensional ball. Its $j$-th intrinsic volume is less or equal than $xt^{-\gamma}$ if its radius is less than $\varrho:=(xt^{-\gamma}/V_j(B^{d-\ell(d-k)}))^{1/j}$. This happens if and only if the distance of the $d-\ell(d-k)$-dimensional ball to the origin is greater than $\sqrt{1-\varrho^2}$. Thus, for fixed $E_1,\ldots,E_{\ell-1}$ the innermost integral in (\ref{eqn:kflatsintersecting}) can be written as
$$\int\limits_{[B^d]_k}\1(E_1\cap\ldots\cap E_{\ell-1}\cap E_\ell\cap B^d\neq\emptyset)-\1(E_1\cap\ldots\cap E_{\ell-1}\cap E_\ell\cap B_{\sqrt{1-\varrho^2}}^d\neq\emptyset)\; \dint E_\ell$$
$$=\varsigma_{d,0,k}(V_{d-k}({E_1\cap\ldots\cap E_{\ell-1}\cap B^d})-V_{d-k}({E_1\cap\ldots\cap E_{\ell-1}\cap B_{\sqrt{1-\varrho^2}}^d})),$$
where, more generally,
$$\varsigma_{d,i,k} = {k!(d-k+i)!\kappa_k\kappa_{d-k+i}\over d!i!\kappa_d\kappa_i}.$$
Applying now $(\ell-1)$-times the Crofton formula \cite[Thm. 5.1.1]{SW}, we obtain 
$$\alpha_t(x)={t^\ell\over \ell!}\left(\prod_{i=0}^{\ell-1}\varsigma_{d,i(d-k),k}\right)(V_{\ell(d-k)}(B^d)-V_{\ell(d-k)}(B^d_{\sqrt{1-\varrho^2}})).$$
The homogeneity of the intrinsic volumes implies that
$$V_{\ell(d-k)}(B^d)-V_{\ell(d-k)}(B^d_{\sqrt{1-\varrho^2}})=V_{\ell(d-k)}(B^d)(1-(1-\varrho^2)^{\ell(d-k)/2}).$$
Using the asymptotic expansion
\begin{equation}\label{eq:Ex2asymptotic}
1-(1-\varrho^2)^{\ell(d-k)/2}=\frac{1}{2}\ell(d-k)\varrho^2-\frac{1}{8}\ell(d-k)(\ell(d-k)-2)\varrho^4+O(\varrho^5),
\end{equation}
for $\varrho\rightarrow 0$, we find that the latter behaves like $\frac{1}{2}\ell(d-k)V_{\ell(d-k)}(B^d)\varrho^2$. Taking $\gamma=j\ell/2$ and substituting the expression for $\varrho$ leads to
$$\alpha(x)=\lim_{t\rightarrow\infty}\alpha_t(x)={\ell(d-k)\over 2\ell!}\left(\prod_{i=0}^{\ell-1}\varsigma_{d,i(d-k),k}\right)V_{\ell(d-k)}(B^d)V_j(B^{d-\ell(d-k)})^{-2/j}x^{2/j},$$
which equals $\beta x^{2/j}$ as some elementary computation shows. Moreover, using once more \eqref{eq:Ex2asymptotic} and substituting $\varrho$, we obtain
$$ |\alpha-\alpha_t(x)| \leq c_1 t^\ell \varrho^4 = c_1(x) t^{-\ell}$$
for $t\geq 1$ and some constants $c_1, c_1(x)>0$. A similar calculation shows that
\begin{eqnarray*}
r_t(x) & = & \sup_{\substack{E_1,\ldots,E_i\in[B^d]_k\\ 1\leq i\leq\ell-1}}t^{\ell-i}\int\limits_{[B^d]_k^{\ell-i}}\1(0<V_j(E_1\cap\ldots\cap E_i\cap F_1\cap\ldots\\ && \hspace{5cm}\ldots\cap F_{\ell-i}\cap B^d)\leq xt^{-\gamma})\; \dint F_1\ldots \dint F_{\ell-i}\\
&\leq & \max_{1\leq i\leq \ell-1} c^{(i)}(x)t^{-i} \leq c_2(x) t^{-1}
\end{eqnarray*}
for $t\geq 1$ with suitable constants $c^{(1)}(x),\hdots,c^{(\ell-1)}(x),c_2(x)>0$. The assertion is now a consequence of Theorem \ref{thm:general}.
\end{proof}

\paragraph{3. Poisson polytope on the unit sphere.} Let us consider an isotropic Poisson point process $\eta_t$ on the $(d-1)$-dimensional unit sphere $\S^{d-1}$ ($d\geq 2$) having intensity $t\geq 1$. The convex hull of all points of $\eta_t$ is the \textit{Poisson polytope} with vertices on the sphere. The convex hull of a random point set is one of the most intensively studied models in geometric probability; see Chapter 8 in \cite{SW} and the references cited therein. Random polytopes with a fixed number of vertices on the boundary of a convex body were investigated in \cite{R02}, whereas \cite{BR10} deals with the general Poisson polytope.

We denote by $L_t^{(m)}$ the distance of the $m$-th smallest element of the point process
$$\xi_t=\{\dist(y_1,y_2):(y_1,y_2)\in\eta_{t,\neq}^2\}$$
to the origin, in particular $$L_t^{(1)}=\min_{(y_1,y_2)\in\eta_{t,\neq}^2}\dist(y_1,y_2).$$ The geometry of $\S^{d-1}$ ensures that $L_t^{(1)}$ is the length of the shortest edge of the Poisson polytope. On the other hand, $L_t^{(m)}, m\geq 2,$ is not necessarily the length of the $m$-th shortest edge of the Poisson polytope since the related line can lie within the interior of the Poisson polytope and does not need to be an edge of it.
\begin{theorem}
Put $\beta={d\over 2}\kappa_d\kappa_{d-1}$. 
\begin{itemize}
 \item[a)] The point processes $t^{2/(d-1)}\xi_t $ converge in distribution as $t\rightarrow\infty$ to a Poisson point process on the positive real half-axis whose intensity measure is given by $$B\mapsto\beta(d-1)\int\limits_Bu^{d-2}\;\dint u,\qquad B\subset \R_+\;{\rm Borel}.$$
 \item[b)] For every $x>0$ there is a constant $C>0$ depending on $x$ and $d$ such that
$$\left|\P(t^{2/(d-1)}L_t^{(m)}>x)-e^{-\beta x^{d-1}}\sum_{i=0}^{m-1}\frac{(\beta x^{d-1})^i}{i!}\right|\leq C t^{-\min\{2/(d-1),1/2\}}$$
for $t\geq 1$ and $m\geq 1$. In particular, $t^{2/(d-1)}$ times the length of the shortest edge of the Poisson polytope follows asymptotically a Weibull distribution with survival function $e^{-\beta x^{d-1}}$.
\end{itemize}
\end{theorem}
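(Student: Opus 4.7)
The plan is to apply Theorem~\ref{thm:general} with $k=2$, $f(y_1,y_2)=\dist(y_1,y_2)$, and $\lambda$ the spherical Hausdorff measure on $\S^{d-1}$ (so that $\lambda(\S^{d-1})=d\kappa_d$). The only geometric input required is the surface area $A(r)$ of a spherical cap of chord-radius $r$. Writing $\theta(r)=2\arcsin(r/2)$ for the geodesic half-angle of the cap, one obtains
\begin{equation*}
A(r)=(d-1)\kappa_{d-1}\int_0^{\theta(r)}\sin^{d-2}(\phi)\,\dint\phi=\kappa_{d-1}r^{d-1}+O(r^d),\qquad r\to 0^+,
\end{equation*}
the expansion following from $\theta(r)=r+O(r^3)$ and $\sin\phi=\phi+O(\phi^3)$.

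Isotropy makes the inner integral in \eqref{eq:alphatdef} equal to $A(xt^{-\gamma})$ for every $y_1$, so
\begin{equation*}
\alpha_t(x)=\frac{t^2}{2}\cdot d\kappa_d\cdot A(xt^{-\gamma})=\frac{d\kappa_d\kappa_{d-1}}{2}\,x^{d-1}\,t^{2-\gamma(d-1)}+O\bigl(t^{2-\gamma d}\bigr).
\end{equation*}
Choosing $\gamma=2/(d-1)$ stabilises the main term and identifies $\beta=\tfrac{d}{2}\kappa_d\kappa_{d-1}$ and $\tau=d-1$, while leaving an error $|\alpha_t(x)-\beta x^{d-1}|=O(t^{-2/(d-1)})$. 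For the locality condition \eqref{eq:condition}, only $j=1$ is relevant since $k=2$, and by isotropy
\begin{equation*}
r_t(x)=\sup_{y_1\in\S^{d-1}} t\,A(xt^{-\gamma})\le C\,x^{d-1}\,t^{1-\gamma(d-1)}=C\,x^{d-1}\,t^{-1},
\end{equation*}
so in particular $\sqrt{r_t(x)}=O(t^{-1/2})$.

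Part (a) then follows immediately from Theorem~\ref{thm:general}~(a) with the constants just identified. For part (b), combining the two error bounds in Theorem~\ref{thm:general}~(b) yields the claimed rate $O(t^{-\min\{2/(d-1),\,1/2\}})$. No serious obstacle is anticipated: the only slightly delicate ingredient is the Taylor expansion of $A(r)$ together with the chord-to-arc conversion, both of which are elementary. Everything else is a direct invocation of the general theorem.
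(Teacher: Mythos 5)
Your proposal is correct and follows essentially the same route as the paper: apply Theorem~\ref{thm:general} with $k=2$, $\gamma=2/(d-1)$, identify $\beta=\tfrac{d}{2}\kappa_d\kappa_{d-1}$ and $\tau=d-1$ from the leading term of the cap area, and check locality via $r_t(x)=O(t^{-1})$. The only (harmless) difference is that you derive the expansion of the cap area $V_{d-1}(\S^{d-1}\cap B^d_r(y))=\kappa_{d-1}r^{d-1}+O(r^d)$ by hand via the chord--arc substitution, whereas the paper cites Reitzner's computation; your error bound $O(r^d)$ already yields the stated rate $t^{-\min\{2/(d-1),1/2\}}$.
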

\begin{proof} 
Fix $x>0$ and observe that in this example
$$\alpha_t(x)={t^2\over 2}\int\limits_{\S^{d-1}}\int\limits_{\S^{d-1}}\1(\dist(y_1,y_2)\leq xt^{-\gamma})\;\dint y_1\,\dint y_2.$$ 
For fixed $y_2\in\S^{d-1}$ the inner integral is the  $(d-1)$-dimensional volume of the intersection of $\S^{d-1}$ with the $d$-dimensional ball $B_{xt^{-\gamma}}^d(y_2)$. The geometric structure of $\S^{d-1}$ implies that 
$$V_{d-1}(\S^{d-1}\cap B_{xt^{-\gamma}}^d(y_2))=\kappa_{d-1}(xt^{-\gamma})^{d-1}+\frac{(d-1)\kappa_{d-1}}{2}(xt^{-\gamma})^d+O(t^{-\gamma(d+1)})$$ as $t\rightarrow\infty$, independently of $y_2$; see Section 3 in \cite{R02}.
Taking $\gamma=2/(d-1)$, we conclude the asymptotic expansion
$$\alpha_t(x)=\frac{d}{2}\kappa_d\kappa_{d-1}x^{d-1}+\frac{d(d-1)\kappa_d\kappa_{d-1}}{4}x^dt^{-2/(d-1)}+O(t^{-4/(d-1)})$$ as $t\rightarrow\infty$. For
$$r_t(x)=\sup_{y_2\in\S^{d-1}}t\int\limits_{\S^{d-1}}\1(\dist(y_1,y_2)\leq xt^{-\gamma})\;\dint y_1=t V_{d-1}(\S^{d-1}\cap B_{xt^{-\gamma}}^d(y))$$ with an arbitrary $y\in\S^{d-1}$ we similarly have
$$r_t(x)=\kappa_{d-1}x^{d-1}t^{-1}+\frac{(d-1)\kappa_{d-1}}{2}x^dt^{-(d+1)/(d-1)}+O(t^{-(d+3)/(d-1)})\rightarrow 0$$
as $t\rightarrow\infty$.
The result is now found by application of Theorem \ref{thm:general}.
\end{proof}

\paragraph{4. Edges in a Gilbert graph.} Let $\eta_t$ be the restriction of a stationary Poisson point process on $\R^d$ with intensity $t\geq 1$ to a compact convex set $W\subset\R^d$ ($d\geq 1$) with positive volume $V_d(W)>0$. The \textit{Gilbert graph} or the \textit{random geometric graph} is constructed by connecting two points of $\eta_t$ by an edge if and only if their distance is smaller than a prescribed bound $\delta>0$; see the monograph \cite{Penrose2003} for an exhaustive reference and \cite{Schmidt} for a closely related recent work. In the following, we assume that the threshold $\delta$ also depends on the intensity parameter $t$ and write $\delta_t$ for this reason. Define
$$\xi_t=\{\dist(y_1,y_2):(y_1,y_2)\in\eta_{t,\neq}^2\;{\rm with}\;\dist(y_1,y_2)\leq \delta_t\}$$
and denote by $G_t^{(m)}$ the distance to the origin of the $m$-th smallest element of $\xi_t$.
\begin{theorem}\label{thm:gilbertraph}
Assume that $\lim\limits_{t\rightarrow\infty} t^{2/d}\delta_t=\infty$ and let $\beta=\frac{\kappa_d}{2}V_d(W)$.
\begin{itemize}
 \item[a)] As $t\rightarrow\infty$ the re-scaled point processes $t^{2/d}\xi_t$ converge in distribution to a Poisson point process on $\R_+$ with intensity measure $$B\mapsto\beta d\int\limits_Bu^{d-1}\;\dint u,\qquad B\subset \R_+\;{\rm Borel}.$$ 
 \item[b)] For every $x>0$ there are constants $C>0$ and $t_0>1$ depending on $x$, $W$, $d$ and $(\delta_t)_{t\geq 1}$ such that
$$\left| \P(t^{2/d}G_t^{(m)}>x) - e^{-\beta x^d}\sum_{i=0}^{m-1}\frac{(\beta x^d)^i}{i!}\right|\leq Ct^{-\min\{2/d,1/2\}}$$ for $t\geq t_0$ and $m\geq 1$. In particular, the distribution of the re-scaled shortest edge length $t^{2/d}G_t^{(1)}$ converges as $t\rightarrow\infty$ to a Weibull distribution with survival function $e^{-\beta x^d}$.
\end{itemize}
\end{theorem}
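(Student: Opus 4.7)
The plan is to apply Theorem~\ref{thm:general} with $\Y = W$, $k = 2$, $f(y_1,y_2) = \dist(y_1,y_2)$ and $\gamma = 2/d$, following the same template as the preceding examples. The one feature not yet encountered is the $t$-dependent cut-off $\delta_t$ built into $\xi_t$, which cannot be absorbed into $f$ since the framework of Theorem~\ref{thm:general} requires $f$ to be independent of $t$. To handle this, I would first introduce the auxiliary point process
$$\tilde\xi_t = \{\dist(y_1,y_2) : (y_1,y_2) \in \eta_{t,\neq}^2\}$$
without any cut-off, with order statistics $\tilde F_t^{(m)}$. Since $t^{2/d}\delta_t \to \infty$, for each $x > 0$ there exists $t_0 = t_0(x,(\delta_s)_{s\geq 1})$ such that $xt^{-2/d} < \delta_t$ for all $t \geq t_0$. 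On this range the points of $\xi_t$ and $\tilde\xi_t$ lying in $[0,xt^{-2/d}]$ coincide, so $\P(t^{2/d}G_t^{(m)} > x) = \P(t^{2/d}\tilde F_t^{(m)} > x)$, and the scaling limits of $t^{2/d}\xi_t$ and $t^{2/d}\tilde\xi_t$ agree on every bounded Borel subset of $\R_+$. Both assertions of the theorem thus reduce to the analogous statements for $\tilde\xi_t$.

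Next I would compute $\alpha_t(x)$ with $\gamma = 2/d$. Rewriting the inner integral as $V_d(W \cap B_{xt^{-2/d}}^d(y_2))$ and splitting $W$ into the interior $W^-_{xt^{-2/d}}$ and the boundary layer, one obtains
$$\frac{\kappa_d}{2}\, V_d(W^-_{xt^{-2/d}})\, x^d \;\leq\; \alpha_t(x) \;\leq\; \frac{\kappa_d}{2}\, V_d(W)\, x^d,$$
since the ball $B^d_{xt^{-2/d}}(y_2)$ is entirely contained in $W$ as soon as $y_2 \in W^-_{xt^{-2/d}}$. The inner-parallel-body bound $V_d(W) - V_d(W^-_{xt^{-2/d}}) = O(xt^{-2/d})$ coming from Steiner's formula (as already used in Example~1) then yields
$$\left|\alpha_t(x) - \frac{\kappa_d}{2}\, V_d(W)\, x^d\right| \leq c_1(W,d)\, x^{d+1}\, t^{-2/d},$$
which verifies \eqref{eq:alpha} with $\beta = \kappa_d V_d(W)/2$ and $\tau = d$. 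The locality quantity admits the trivial estimate
$$r_t(x) = \sup_{y_2 \in W} t \int\limits_W \1(\dist(y_1,y_2) \leq xt^{-2/d})\,\dint y_1 \;\leq\; t\,\kappa_d x^d t^{-2} \;=\; \kappa_d x^d\, t^{-1},$$
so \eqref{eq:condition} holds and $\sqrt{r_t(x)} = O(t^{-1/2})$.

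Plugging both bounds into Theorem~\ref{thm:general} yields part~a) directly and, for part~b), a rate
$$|\beta x^d - \alpha_t(x)| + C_{f,x}\sqrt{r_t(x)} = O(t^{-2/d}) + O(t^{-1/2}) = O(t^{-\min\{2/d,1/2\}})$$
valid for $t \geq t_0$, matching the assertion. The anticipated main obstacle is the reduction in the first paragraph — passing cleanly from $\xi_t$ to $\tilde\xi_t$ while tracking the $t_0$-dependence; once this book-keeping is in place, the integral-geometric estimates are considerably simpler than in Examples~1--3 because no Crofton formula is required.
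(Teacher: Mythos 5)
Your proposal is correct and follows essentially the same route as the paper: identify $\gamma=2/d$, compute $\alpha_t(x)\to\frac{\kappa_d}{2}V_d(W)x^d$ with error $O(t^{-2/d})$, bound $r_t(x)\leq\kappa_d x^d t^{-1}$, and dispose of the cut-off $\delta_t$ by noting that $xt^{-2/d}\leq\delta_t$ for $t\geq t_0$. The only (immaterial) differences are that you make the comparison with the cut-off-free process explicit via $\tilde\xi_t$, whereas the paper handles this implicitly, and that you estimate $\alpha_t(x)$ by sandwiching with the inner parallel body $W^-_{xt^{-2/d}}$ while the paper uses the translative formula $\int_{\R^d}V_d(W\cap B^d_r(y))\,\dint y=V_d(W)\kappa_d r^d$ together with an outer Steiner bound; both yield the same rate.
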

\begin{proof}
The assumption $\lim\limits_{t\rightarrow\infty} t^{2/d}\delta_t=\infty$ ensures that for every $x>0$ there is a constant $t_0\geq 1$ such that $xt^{-\gamma}\leq \delta_t$ for all $t\geq t_0$. For such $x$ and $t\geq t_0$ we have
\begin{eqnarray*}
\alpha_t(x) & = & \frac{t^2}{2}\int\limits_{W}\int\limits_W\1(\dist(y_1,y_2)\leq xt^{-\gamma})\;\dint y_1 \; \dint y_2 = \frac{t^2}{2}\int\limits_{W} V_d(W\cap B^d_{xt^{-\gamma}}(y_2))\;\dint y_2\\
& = & \frac{t^2}{2}\int\limits_{\R^d} V_d(W\cap B^d_{xt^{-\gamma}}(y_2))\;\dint y_2 - \frac{t^2}{2}\int\limits_{\R^d\setminus W} V_d(W\cap B^d_{xt^{-\gamma}}(y_2))\;\dint y_2.
\end{eqnarray*}
From Theorem 5.2.1 in \cite{SW} (see Eq. (5.14) in particular), it follows that
\begin{equation}\label{eq:Ex4summand1}
\frac{t^2}{2}\int\limits_{\R^d} V_d(W\cap B^d_{xt^{-\gamma}}(y_2))\;\dint y_2=\frac{t^2}{2} V_d(W)V_d(B^d_{xt^{-\gamma}})=\frac{\kappa_d }{2}V_d(W)t^2 (xt^{-\gamma})^d.
\end{equation}
By the Steiner formula (see Eq. (14.5) in \cite{SW}), one has
\begin{eqnarray*}
&& \frac{t^2}{2}\int\limits_{\R^d\setminus W} V_d(W\cap B^d_{xt^{-\gamma}}(y_2))\;\dint y_2\\
& \leq & \frac{\kappa_d}{2} t^2 (xt^{-\gamma})^d V_d(\{y\in \R^d\setminus W: \dist(y,W)\leq xt^{-\gamma}\})\\
& = & \frac{\kappa_d}{2} t^2 (xt^{-\gamma})^d \sum_{j=0}^{d-1} \kappa_{d-j}V_j(W)(xt^{-\gamma})^{d-j}
\end{eqnarray*}
so that $\alpha_t(x)$ is dominated by \eqref{eq:Ex4summand1} and its asymptotic behavior is given by $\frac{\kappa_d}{2}V_d(W)t^2(xt^{-\gamma})^d$. Choosing $\gamma=2/d$ yields
$$\alpha(x)=\lim_{t\rightarrow\infty}\alpha_t(x) = \frac{\kappa_d }{2}V_d(W)x^d=\beta x^d$$
and
$$|\alpha(x)-\alpha_t(x)|\leq \frac{\kappa_d}{2}  \sum_{j=0}^{d-1} \kappa_{d-j}V_j(W)x^{2d-j}t^{-2+2j/d}.$$
Moreover, one has
$$r_t(x)=\sup_{y_2\in W}t\int\limits_W\1(\dist(y_1,y_2)\leq xt^{-\gamma})\;\dint y_1\leq t\kappa_d (xt^{-\gamma})^d = \kappa_d x^d t^{-1}.$$
The assertion is now a direct consequence of Theorem \ref{thm:general}.
\end{proof} 

\begin{remark}\rm
As $t\rightarrow\infty$, the shortest edge of the Gilbert graph is the same as the shortest edge of the so-called {\em Delaunay graph} (cf. \cite{Penrose2003,SW} for background material on Delaunay graphs or tessellations). Hence, the length of the shortest edge in the Delaunay graph enjoys the same asymptotic behavior as $G_t^{(1)}$ considered in Theorem \ref{thm:gilbertraph}.
\end{remark}

\paragraph{5. Small simplices generated by Poisson points.} Denote by $\eta_t$ the restriction of a stationary Poisson point process of intensity $t\geq 1$ to a compact convex set $W\subset\R^d$ ($d\geq 1$) with volume $V_d(W)>0$. Consider the family of all $d$-dimensional simplices that can be formed by any $d+1$ distinct points of $\eta_t$ and denote by $S_t^{(m)}$ the $m$-th smallest volume of them. The sequence $(S_t^{(m)})_{m\geq 1}$ forms a point process $\xi_t $, i.e.
$$\xi_t=\{V_d([y_1,\ldots,y_{d+1}]):(y_1,\ldots,y_{d+1})\in\eta_{t,\neq}^{d+1}\},$$ where $[y_1,\ldots,y_{d+1}]$ stands for the simplex with vertices $y_1,\ldots,y_{d+1}$. A similar problem in the special planar set-up has been studied in \cite{GJ}, where a fixed number of points in $W$ that tends to infinity was used in place of the Poisson point process. In fact, in \cite{GJ} the authors showed Poisson point process convergence for the re-scaled order statistics. From this, a de-poissonized version of the planar case of our Theorem \ref{thm:simplices} below can be derived. Moreover, we like to point out that for $d=2$ we have the simple expression $\beta=2V_2(W)^2$ for the parameter in Theorem \ref{thm:simplices} below thanks to an integral-geometric formula due to Crofton; cf. \cite[Eq. (8.58)]{SW}.
\begin{theorem}\label{thm:simplices}
Define $$\beta={d\kappa_d\over d+1}\int\limits_{[W]_{d-1}}V_{d-1}(W\cap H)^{d+1}\; \dint H.$$ Then the point processes $t^{d+1}\xi_t $ converge in distribution as $t\rightarrow\infty$ to a homogeneous Poisson point process on $\R_+$ with intensity $\beta$, so that for all $m\geq 1$ and $x>0$, $$\lim_{t\rightarrow\infty}\P(t^{d+1}S_t^{(m)}>x)=e^{-\beta x}\sum_{i=0}^{m-1}{(\beta x)^i\over i!}.$$ In particular, the re-scaled smallest simplex volume $t^{d+1}S_t^{(1)}$ is asymptotically exponentially distributed with parameter $\beta$.
\end{theorem}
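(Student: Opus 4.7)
The plan is to apply Theorem~\ref{thm:general} in the setup $\Y=W$ with $\lambda$ the restriction of Lebesgue measure to $W$, $k=d+1$, $f(y_1,\hdots,y_{d+1})=V_d([y_1,\hdots,y_{d+1}])$, and scaling exponent $\gamma=d+1$. Since the asserted limit is a \emph{homogeneous} Poisson process of intensity $\beta$, one needs \eqref{eq:alpha} to hold with $\tau=1$ and the stated constant $\beta$; the formula for the order statistics then follows from part (b) of Theorem~\ref{thm:general} with $\tau=1$. The proof therefore reduces to (i) computing the limit of $\alpha_t(x)$ and (ii) checking the locality condition \eqref{eq:condition}.

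For (i), I would first integrate out the vertex $y_{d+1}$: for $(y_1,\hdots,y_d)$ in general position spanning the hyperplane $H=H(y_1,\hdots,y_d)$, the base-times-height identity
$$V_d([y_1,\hdots,y_{d+1}])=\frac{1}{d}\,\dist(y_{d+1},H)\,V_{d-1}([y_1,\hdots,y_d])$$
shows that $f\le xt^{-(d+1)}$ is equivalent to $y_{d+1}$ lying in a two-sided slab around $H$ of half-width $h_t:=dxt^{-(d+1)}/V_{d-1}([y_1,\hdots,y_d])$. Since $h_t\to 0$, the volume of the intersection of this slab with $W$ is asymptotic to $2h_t\,V_{d-1}(W\cap H)$; the boundary correction is governed by a Steiner-type inner-parallel-set estimate and is of strictly smaller order in $t$, as in the proof of Theorem~\ref{thm:proximity}.

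Substituting this slab expression into \eqref{eq:alphatdef} and invoking the affine Blaschke--Petkantschin formula for $d$ points in $\R^d$ spanning a hyperplane (see \cite{SW}) then transforms the remaining integral over $W^d$ into an integral over $[W]_{d-1}$. The Blaschke--Petkantschin density contributes a factor $V_{d-1}([y_1,\hdots,y_d])$ which cancels the reciprocal produced by the slab calculation, and the inner integral of $V_{d-1}(W\cap H)$ over $(W\cap H)^d$ collapses to $V_{d-1}(W\cap H)^{d+1}$. Collecting all prefactors yields
$$\alpha_t(x)\;\longrightarrow\;\frac{d\kappa_d}{d+1}\,x\int_{[W]_{d-1}}V_{d-1}(W\cap H)^{d+1}\;\dint H=\beta x,$$
which is \eqref{eq:alpha} with $\tau=1$. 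For (ii), each $j\in\{1,\hdots,d\}$ is controlled by the analogous slab estimate combined with a (linear or affine) Blaschke--Petkantschin reduction. The cleanest case $j=d$ uses translation invariance to reduce to the Lebesgue measure of the set $\{(z_1,\hdots,z_d)\in(W-W)^d:|\det(z_1,\hdots,z_d)|\le d!\,xt^{-(d+1)}\}$, which is $O(t^{-(d+1)})$ and contributes $O(t^{-1})$ after multiplication by $t^d$; the intermediate values of $j$ are handled in the same spirit.

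The main obstacle is the correct application of the affine Blaschke--Petkantschin formula and the tracking of constants: the cancellation of $V_{d-1}([y_1,\hdots,y_d])$ between the slab estimate and the Blaschke--Petkantschin density is the decisive algebraic step, and the numerical prefactor $d\kappa_d/(d+1)$ must be reconciled with the normalisation conventions of \cite{SW}. A secondary technical point is the handling of affinely dependent fixed configurations in the $j=1$ contribution to \eqref{eq:rtdef}; these form a Lebesgue-null subset and are harmless within the probabilistic framework underlying Theorem~\ref{thm:general}.
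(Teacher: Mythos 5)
Your proposal follows essentially the same route as the paper's proof: integrate out $y_{d+1}$ to reduce to the volume of a slab of half-width $dxt^{-(d+1)}/V_{d-1}([y_1,\hdots,y_d])$ intersected with $W$, apply the affine Blaschke--Petkantschin formula so that the density factor $V_{d-1}([y_1,\hdots,y_d])$ cancels the reciprocal from the slab width, use $V_d(H_\varrho\cap W)\sim 2\varrho\, V_{d-1}(H\cap W)$ together with dominated convergence to obtain $\alpha_t(x)\to\beta x$, and verify the locality condition by an analogous computation (which the paper likewise only sketches). The argument and the identification of the decisive cancellation are exactly as in the paper.
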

\begin{proof}
In this example, we have
$$\alpha_t(x)={t^{d+1}\over(d+1)!}\int\limits_W\ldots\int\limits_W\1(V_d([y_1,\ldots,y_{d+1}])\leq xt^{-\gamma})\;\dint y_{d+1}\ldots \dint y_1.$$
For fixed $y_1,\ldots,y_d\in W$ in general position let $H(y_1,\ldots,y_d)$ be the unique hyperplane through these points and put 
$$H(y_1,\ldots,y_d)_r:=\{y\in\R^d:\dist(y,H(y_1,\ldots,y_d))\leq r\}$$
for $r>0$. With the abbreviation $\varrho=\varrho(y_1,\ldots,y_d):=dxt^{-\gamma}/V_{d-1}([y_1,\ldots,y_d])$ and the affine Blaschke-Petkantschin formula \cite[Thm. 7.2.5]{SW}, $\alpha_t(x)$ can be re-written as 
\begin{eqnarray*}
\alpha_t(x) &=& {t^{d+1}\over(d+1)!}\int\limits_W\ldots\int\limits_W V_d(H(y_1,\ldots, y_d)_\varrho\cap W)\;\dint y_d\ldots \dint y_1\\
& = & \frac{\kappa_d t^{d+1}}{2(d+1)}\int\limits_{[W]_{d-1}}\int\limits_{H\cap W}\hdots\int\limits_{H\cap W}V_d(H(y_1,\hdots, y_d)_\varrho\cap W)\\
& & \hspace{3cm}\times V_{d-1}([y_1,\hdots, y_d])\;\dint y_d\hdots \dint y_1\dint H.
\end{eqnarray*}
Using the fact that
\begin{equation}\label{eq:asymptoticSimplices}
\lim_{\varrho \rightarrow 0}\frac{V_d(H(y_1,\hdots, y_d)_\varrho\cap W)}{\varrho}=2 V_{d-1}(H(y_1,\hdots, y_d)\cap W)
\end{equation}
and choosing $\gamma=d+1$, we find
\begin{eqnarray*}
&&\lim_{t\rightarrow\infty} t^{d+1}V_d(H(y_1,\hdots, y_d)_\varrho\cap W)\\
 & = & \lim_{t\rightarrow\infty} \frac{V_d(H(y_1,\hdots, y_d)_{dxt^{-(d+1)}/V_{d-1}([y_1,\hdots, y_d])}\cap W)}{t^{-(d+1)}}\\
& = & 2 d x V_{d-1}(H(y_1,\hdots, y_d)\cap W) V_{d-1}([y_1,\hdots, y_d])^{-1}.
\end{eqnarray*}
From the dominated convergence theorem, it finally follows that
\begin{eqnarray*}
\alpha & = & \lim_{t\rightarrow \infty} \alpha_t(x) =\frac{d\kappa_d x}{d+1}\int\limits_{[W]_{d-1}}\int\limits_{H\cap W}\hdots\int\limits_{H\cap W} V_{d-1}(W\cap H) \; \dint y_d\hdots \dint y_1 \; \dint H\\ 
& = & \frac{d\kappa_d x}{d+1} \int\limits_{[W]_{d-1}} V_{d-1}(W\cap H)^{d+1} \; \dint H=\beta x.
\end{eqnarray*}
By a similar computation, one sees that the condition \eqref{eq:condition} is also satisfied and that the assertion is a consequence of Theorem \ref{thm:general}.
\end{proof}
\begin{remark}\rm
Although Theorem \ref{thm:general} b) delivers an exact rate of convergence, we cannot provide an explicit rate here. This is due to the fact that the exact asymptotic behavior in (\ref{eq:asymptoticSimplices}) depends in a delicate way on the smoothness of the boundary of $W$. A similar comment also applies to the next example.
\end{remark}

\paragraph{6. Small simplices generated by Poisson hyperplanes.}
Let $\eta_t$ be the restriction of a stationary and isotropic Poisson point process of hyperplanes of intensity $t\geq 1$ to $[W]_{d-1}$, where $W$ is a compact convex set with $W\subset\R^d$ ($d\geq 2$) with $V_d(W)>0$. Any $d+1$ different hyperplanes of $\eta_t$ generate a random simplex in $\R^d$ almost surely. Let $[H_1,\hdots,H_{d+1}]$ be the simplex generated by the hyperplanes $H_1,\hdots,H_{d+1}$, put
$$\xi_t=\{V_d([H_1,\hdots,H_{d+1}]):(H_1,\hdots,H_{d+1})\in\eta_{t,\neq}^{d+1}\;{\rm with}\; [H_1,\hdots,H_{d+1}]\subset W\}$$
and let $T_t^{(m)}$ be the $m$-th smallest simplex volume. For fixed hyperplanes $H_1,\hdots, H_d$, we denote by $H_{u,\delta}$ the hyperplane with unit normal vector $u\in{\S}^{d-1}$ and distance $\delta>0$ to the almost surely uniquely determined intersection point of $H_1,\hdots,H_d$.

\begin{theorem}
Define $$\beta=\frac{1}{(d+1)!}\int\limits_{[W]_{d-1}}\hdots\int\limits_{[W]_{d-1}}\int\limits_{\S^{d-1}}V_d([H_1,\hdots,H_d,H_{u,1}])^{-1/d} \; \dint u \; \dint H_d\hdots \dint H_1.$$ Then as $t\rightarrow\infty$ the re-scaled point processes $t^{d(d+1)}\xi_t $ converge in distribution to a Poisson point process whose intensity measure is given by $$B\mapsto\beta d^{-1}\int\limits_Bu^{(1-d)/d}\;\dint u,\qquad B\subset \R_+\;{\rm Borel},$$ whence for $m\geq 1$ and $x>0$ it holds that
$$\lim_{t\rightarrow\infty} \P(t^{d(d+1)}T_t^{(m)}>x)=e^{-\beta x^{1/d}}\sum_{i=0}^{m-1} \frac{(\beta x^{1/d})^i}{i!}.$$ In particular, $t^{d(d+1)}T_t^{(1)}$ converges to a Weibull distributed random variable with survival function $e^{-\beta x^{1/d}}$.
\end{theorem}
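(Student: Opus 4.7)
The plan is to apply Theorem \ref{thm:general} with $k=d+1$, $\c=d(d+1)$ and $\tau=1/d$, taking $\Y=[W]_{d-1}$ equipped with the stationary isotropic hyperplane measure and $f(H_1,\hdots,H_{d+1})=V_d([H_1,\hdots,H_{d+1}])$ when $[H_1,\hdots,H_{d+1}]\subset W$ (and $f=+\infty$ otherwise, exactly as in the definition of $\xi_t$). Two tasks remain: compute the limit of $\a_t(x)$ and verify the locality condition $r_t(x)\to 0$.

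\textbf{Asymptotics of $\a_t(x)$.} The crux is a scaling identity. Fix $H_1,\hdots,H_d\in[W]_{d-1}$; almost surely they meet in a single point $p=p(H_1,\hdots,H_d)$, and we parametrise the last hyperplane as $H_{u,\d}$, with unit normal $u\in\S^{d-1}$ and signed distance $\d\in\R$ from $p$; in these local coordinates the invariant hyperplane measure has density $\dint u\,\dint\d$ up to a fixed normalising constant. The $d$ vertices of the simplex $[H_1,\hdots,H_d,H_{u,\d}]$ other than $p$ lie on the $d$ fixed rays out of $p$ cut by the angle cone of $H_1,\hdots,H_d$, and each of their distances to $p$ scales linearly in $\d$; hence
$$V_d([H_1,\hdots,H_d,H_{u,\d}])=|\d|^d\,V_d([H_1,\hdots,H_d,H_{u,1}]).$$
Integrating $\1(V_d\leq xt^{-\c})$ over $\d$ therefore yields a factor $2(xt^{-\c})^{1/d}V_d([H_1,\hdots,H_d,H_{u,1}])^{-1/d}$; combined with the symmetry factor $1/(d+1)!$, we obtain, up to boundary effects,
$$\a_t(x)\;=\;\frac{t^{d+1-\c/d}x^{1/d}}{(d+1)!}\int\limits_{[W]_{d-1}^d}\int\limits_{\S^{d-1}}V_d([H_1,\hdots,H_d,H_{u,1}])^{-1/d}\,\dint u\,\dint H_d\cdots\dint H_1 + o(1).$$
With $\c=d(d+1)$ the $t$-power vanishes and the main term equals $\b x^{1/d}$. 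The restriction $[H_1,\hdots,H_{d+1}]\subset W$ asymptotically forces $p\in W$ because the simplex has diameter $O(\d)=O(t^{-(d+1)})$; the tuples with $p$ in an $O(t^{-(d+1)})$-neighbourhood of $\partial W$ contribute negligibly by a Steiner-formula estimate on $W\setminus W^-_{O(t^{-(d+1)})}$, analogous to the boundary analysis in the proof of Theorem \ref{thm:proximity}.

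\textbf{Locality.} For $1\le j\le d$ and fixed $H_1,\hdots,H_{d+1-j}\in[W]_{d-1}$, we bound
$$\l_t^j\bigl(\{(G_1,\hdots,G_j):V_d([H_1,\hdots,H_{d+1-j},G_1,\hdots,G_j])\le xt^{-\c}\}\bigr).$$
Each of the first $j-1$ free hyperplanes may range over $[W]_{d-1}$ freely (contributing $O(t)$ to the $\l_t$-measure), while the remaining last hyperplane is constrained, by the scaling identity of the previous step, to a set of $\l_t$-measure $O(t\cdot(xt^{-\c})^{1/d})=O(t^{-d})$. Hence the displayed quantity is $O(t^{j-1-d})$, maximised at $j=d$ to give $r_t(x)=O(t^{-1})\to 0$.

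\textbf{Conclusion and main obstacle.} With both hypotheses verified, Theorem \ref{thm:general}(a) yields the Poisson point process limit with intensity $\b d^{-1}u^{(1-d)/d}\,\dint u$, and part~(b) delivers the CDF identity for each $T_t^{(m)}$ (in particular the Weibull limit for $T_t^{(1)}$). The one genuinely new ingredient is the scaling identity for the simplex volume together with the $(u,\d)$-parametrisation of the invariant hyperplane measure anchored at the apex $p$; once these are in place the rest is routine integral geometry. As in Example~5, no explicit rate is given since the boundary-effect error depends on the smoothness of $\partial W$.
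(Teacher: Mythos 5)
Your proposal follows the paper's proof essentially verbatim: the same $(u,\delta)$-parametrisation of the last hyperplane anchored at the intersection point of the first $d$, the same homogeneity identity $V_d([H_1,\ldots,H_d,H_{u,\delta}])=\delta^d V_d([H_1,\ldots,H_d,H_{u,1}])$, integration over $\delta$ followed by dominated convergence to get $\alpha_t(x)\to\beta x^{1/d}$ with $\gamma=d(d+1)$, and a sketched locality check (which the paper itself only asserts ``can be checked in a similar way''). The sole cosmetic divergence is your signed-$\delta$ parametrisation, which produces a factor $2$ that must be absorbed by the $1/2$ in the normalisation of the invariant measure on $\S^{d-1}\times\R$; the paper sidesteps this by taking $\delta\in[0,\infty)$ with $u$ ranging over all of $\S^{d-1}$.
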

\begin{proof}
We have 
\begin{eqnarray*}
\alpha_t(x) & = & \frac{t^{d+1}}{(d+1)!}\int\limits_{[W]_{d-1}} \!\!\!\!\! \hdots \int\limits_{[W]_{d-1}} \!\!\!\!\! \1([H_1,\hdots,H_{d+1}]\subset W)\\ && \hspace{3.65cm} \times\1(V_d([H_1,\hdots,H_{d+1}])\leq xt^{-\gamma}) \; \dint H_{d+1}\hdots \dint H_1
\end{eqnarray*}
in this example. For fixed hyperplanes $H_1,\hdots,H_d$, we identify $H_{d+1}$ with the pair $(u,\delta)\in{\S}^{d-1}\times [0,\infty)$ and write $H_{u,\delta}$ instead of $H_{d+1}$. Since 
$$V_d([H_1,\hdots,H_d,H_{u,\delta}])=\delta^dV_d([H_1,\hdots,H_d,H_{u,1}])$$
and $[H_1,\hdots,H_d,H_{u,\delta}]\subset W$ whenever $\delta$ is small enough, it holds that
\begin{eqnarray*}
&&\int\limits_0^\infty  \1([H_1,\hdots,H_d,H_{u,\delta}]\subset W) \; \1(V_d([H_1,\hdots,H_d,H_{u,\delta}])\leq xt^{-\gamma}) \; \dint \delta\\ & = & \frac{(xt^{-\gamma})^{1/d}}{V_d([H_1,\hdots,H_d,H_{u,1}])^{1/d}}
\end{eqnarray*}
if $t$ is sufficiently large. By the choice $\gamma=d(d+1)$ and the dominated convergence theorem, we obtain
\begin{eqnarray*}
\alpha & = & \lim_{t\rightarrow\infty}\alpha_t(x)\\
& = & \frac{x^{1/d}}{(d+1)!}\int\limits_{[W]_{d-1}}\hdots\int\limits_{[W]_{d-1}}\int\limits_{\S^{d-1}}V_d([H_1,\hdots,H_d,H_{u,1}])^{-1/d} \; \dint u \; \dint H_d\hdots \dint H_1\\
& = & \beta x^{1/d}.
\end{eqnarray*}
Condition \eqref{eq:condition} can be checked in a similar way, hence, Theorem \ref{thm:general} can be applied and completes the proof.
\end{proof}

The hyperplanes of $\eta_t$ partition the space into random polytopes (called cells) and generate this way a tessellation of ${\R}^d$, the so-called {\em Poisson hyperplane tessellation}, which is one of the standard models considered in stochastic geometry; see \cite{SW} and the references therein. In the planar case $d=2$, the smallest triangle generated by the Poisson lines (i.e. $1$-dimensional hyperplanes in ${\R}^2$) cannot be hit by other lines of $\eta_t$, since otherwise there would be an even smaller triangle (note that in higher dimensions this argument fails). Hence, the smallest triangle is also a cell of the tessellation and its area enjoys the following asymptotic behavior.

\begin{corol}\label{cor:smallcells}
Let $\Delta_t^{\rm min}$ be the smallest triangular cell that is included in $W$ of a tessellation generated by a stationary and isotropic Poisson line process of intensity $t\geq 1$. Then $(t^6V_2(\Delta_t^{\rm min}))_{t\geq 1}$ converges as $t\rightarrow\infty$ in distribution to a Weibull distributed random variable with survival function $e^{-\beta\sqrt{x}}$.
\end{corol}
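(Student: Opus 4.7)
The plan is to reduce to the $d=2$ case of the previous theorem by showing that, almost surely, $V_2(\Delta_t^{\rm min})$ equals the minimum simplex volume $T_t^{(1)}$ appearing there. Setting $d=2$ gives the scaling factor $t^{d(d+1)}=t^6$ and the Weibull exponent $1/d=1/2$, which matches the target survival function $e^{-\beta\sqrt{x}}$ exactly, so once the almost sure identity is established the conclusion is immediate.

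The easy direction is that every triangular cell $C\subset W$ of the tessellation is bounded by three lines of $\eta_t$, hence equals a simplex $[H_1,H_2,H_3]$ contained in $W$ and counted by $\xi_t$; taking minima yields $T_t^{(1)}\le V_2(\Delta_t^{\rm min})$ almost surely. For the reverse direction, let $[H_1^\ast,H_2^\ast,H_3^\ast]\subset W$ be the triple realising $T_t^{(1)}$ and suppose for contradiction that some other line $H\in\eta_t$ met its interior. In the plane, a chord of a triangle always separates off a sub-triangle: one of the two pieces is of the form $[H_i^\ast,H_j^\ast,H]$, sits inside $W$, and has strictly smaller area than the original. This contradicts the minimality of $T_t^{(1)}$, so the minimising triangle is crossed by no further line of $\eta_t$ and is therefore itself a cell. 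Hence $V_2(\Delta_t^{\rm min})\le T_t^{(1)}$, giving equality.

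The only real subtlety is this chord argument, which is genuinely two-dimensional: for $d\ge 3$ a hyperplane cutting a simplex generically produces two non-simplicial pieces and the reasoning breaks down, as already noted in the text. With the almost sure identity $V_2(\Delta_t^{\rm min})=T_t^{(1)}$ in hand, the convergence in distribution and the form of the limiting Weibull survival function follow directly from the previous theorem applied with $d=2$.
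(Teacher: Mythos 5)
Your proposal is correct and follows essentially the same route as the paper: the paper likewise argues that the area-minimising triangle formed by three Poisson lines cannot be crossed by a further line (since a chord would cut off a strictly smaller triangle), hence is itself a cell, and then invokes the hyperplane-simplex theorem with $d=2$. Your write-up merely makes explicit the two inequalities giving $V_2(\Delta_t^{\rm min})=T_t^{(1)}$, which the paper leaves implicit.
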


\begin{remark}\rm Using heuristic arguments, it has been argued in \cite{Miles} that small cells of tessellations generated by a stationary and isotropic Poisson line process have a triangular shape. This way, Corollary \ref{cor:smallcells} makes a statement not only about the area of the smallest triangular cell, but also about the area of the smallest cell in general. A formal proof, however, is still missing.
\end{remark}

\section{Chaos and Poisson approximation}\label{secCHAOS}

The framework is a non-atomic Borel measurable space $(\Y,\cY)$ with a $\sigma$-finite measure $\lambda$. In what follows, $\eta$ is a \textit{Poisson point process} on $\Y$ with \textit{intensity measure} $\lambda$. This is to say, $\eta$ is a collection of random variables defined on some probability space $(\Omega,{\cal F},\P)$, indexed by the elements of $\cY_\lambda=\{B\in\cY:\lambda(B)<\infty\}$ such that
\begin{itemize}
 \item[(i)] for disjoint sets $A,B\in\cY_\lambda$, $\eta(A)$ and $\eta(B)$ are independent;
 \item[(ii)] $\eta(B)$ is Poisson distributed with mean $\lambda(B)$ for any $B\in\cY_\lambda$.
\end{itemize}
We also write $\hat{\eta}(B)=\eta(B)-\lambda(B)$ for $B\in\cY_\lambda$ and $\{\hat{\eta}(B):B\in\cY_\lambda\}$ for the \textit{compensated Poisson point process}. As usual in point process theory, we shall identify $\eta$ with its support and write $y\in\eta$ to indicate that $y\in\Y$ is charged by $\eta$. Similarly, we write $(y_1,\ldots,y_k)\in\eta_{\neq}^k$ for $k\geq 1$ to say that $y_1,\ldots,y_k$ are distinct points in $\Y$ with $y_i\in\eta$ for $i=1,\ldots,k$.

Given two integers $p,q\geq 1$, write $L^p(\lambda^q)$ for the family of functions $h:\Y^q\rightarrow\R$ such that $||h||_{L^p(\lambda^q)}:=(\int_{\Y^q}|h|^p\;\dint\lambda^q)^{1/p}<\infty$ and $L_{\rm sym}^p(\lambda^q)$ for the subspace of $L^p(\lambda^q)$ consisting of functions that are invariant under permutation of the $q$ arguments, so-called \textit{symmetric functions}. The standard scalar product in $L^2(\lambda^q)$ is denoted by $\langle\cdot,\cdot\rangle_{L^2(\lambda^q)}$.

For a deterministic function $h\in L^2(\lambda)$ we write $I_1(h)$ for the \textit{Wiener-It\^o integral} of $h$ and for every $q\geq 2$ and $h\in L^2_{\rm sym}(\lambda^q)$ we indicate by $I_q(h)$ the \textit{multiple Wiener-It\^o integral of order $q$} of $h$ with respect to the compensated Poisson point process $\hat{\eta}$; cf. \cite{LastPenrose2011,NualartVives90,PeccatiTaqqu2010}. These stochastic integrals are centered (i.e. $\E I_q(h)=0$) and satisfy the isometry relation
\begin{equation}\label{eq:isometry}
\E[I_m(g)I_n(h)]=\langle g,h\rangle_{L^2(\lambda^n)}\1(n=m)
\end{equation}
for any integers $m,n\geq 1$ and every $g\in L_{\rm sym}^2(\lambda^m)$ and $h\in L_{\rm sym}^2(\lambda^n)$. The Hilbert space $\{I_q(h):h\in L_{\rm sym}^2(\lambda^q)\}$, $q\geq 1,$ is called the \textit{$q$-th Wiener-It\^o chaos} associated with $\eta$.

It is one of the crucial features of a Poisson point process that every $F\in L^2(\P_{\eta})$ (here and below $\P_\eta$ stands for the distribution of $\eta$) can be decomposed into its chaotic components, which is to say that $F$ may be written as 
\begin{equation}\label{eq:chaos}
F=\E F+\sum_{q=1}^\infty I_q(h_q),
\end{equation}
where the series converges in $L^2(\P_\eta)$ and for each $q\geq 1$, $h_q$ is an element in $L_{\rm sym}^2(\lambda^q)$. The representation (\ref{eq:chaos}) is called \textit{Wiener-It\^o chaos decomposition} of $F$ with \textit{kernels} $h_q$. Note in particular that (\ref{eq:chaos}) combined with the isometry (\ref{eq:isometry}) implies the variance formula
\begin{equation}\label{eq:varianceGeneral}
{\rm Var}\;F=\sum_{q=1}^\infty q!||h_q||_{L^2(\lambda^q)}^2.
\end{equation}
We will later need the chaos decomposition of a special class of Poisson functionals introduced in \cite{RS11}. Let $f\in L_{\rm sym}^1(\lambda^k)$ and define
\begin{equation}\label{eq:geomustat}
U=\frac{1}{k!}\sum_{(y_1,\ldots,y_k)\in\eta_{\neq}^k}f(y_1,\ldots,y_k)
\end{equation}
for some fixed $k\geq 1$. If the functional $U$ satisfies $U\in L^2(\P_\eta)$, it is called a \textit{Poisson U-statistic} and from \cite{RS11} we know that the chaos decomposition of $U$ is finite and given by $U=\E U+\sum_{q=1}^kI_q(h_q)$ with $\E U=\int_{\Y^k}f\;\dint\lambda^k$ by the Campbell Theorem for point processes \cite[Thm. 3.1.2]{SW} and with 
\begin{equation}\label{eq:kernelsUstatistic}
h_q(y_1,\hdots,y_q)=\frac{1}{q!(k-q)!}\int\limits_{\Y^{k-q}} f(y_1,\hdots,y_q,\hat{y}_1,\hdots,\hat{y}_{k-q})\;\lambda^{k-q}(\dint(\hat{y}_1,\hdots,\hat{y}_{k-q})) 
\end{equation}
for $q=1,\ldots,k$. (Thanks to the structure of $U$ we have that $h_q\equiv 0$ for $q\geq k+1$.)

In the proof of Theorem \ref{thm:general} we make use of two Malliavin-type operators on the Poisson space. We will briefly recall their definitions and refer to \cite{NualartVives90,Peccatietal2010} for further details. We denote by ${\rm dom}\; D$ the set of all $F\in L^2(\P_\eta)$ with chaos decomposition (\ref{eq:chaos}) satisfying $\sum_{q=1}^\infty qq!||h_q||_{L^2(\lambda^q)}^2<\infty$ and define for $F\in{\rm dom}\;D$ the random function $\Y\ni y\mapsto D_yF$ by $$D_yF=\sum_{q=1}^\infty qI_{q-1}(h_q(y,\cdot)).$$ The operator $D$ is called the \textit{Malliavin derivative} and has an intuitive interpretation as difference operator. In fact, it holds that $$D_yF(\eta)=F(\eta+\delta_y)-F(\eta),$$ where $\delta_y$ stands for the unit mass Dirac measure at $y\in\Y$ cf. Lemma 2.5 in \cite{NualartVives90}. Besides $D$ we need the \textit{pseudo-inverse $L^{-1}$ of the Ornstein-Uhlenbeck generator}. For centered $F\in L^2(\P_\eta)$ as in (\ref{eq:chaos}) we put $$L^{-1}F=-\sum_{q=1}^\infty{1\over q}I_q(h_q).$$ Moreover, for a not necessarily centered Poisson functional $F\in L^2(\P_\eta)$ we define $L^{-1}F=L^{-1}(F-\E F)$ by convention. (Note that for $F=U$ as in (\ref{eq:geomustat}) the sums in the definitions of $D_yU$ and $L^{-1}U$ are in fact finite.)

We are now prepared to rephrase one of the main findings of the paper \cite{Peccati11}, Theorem 3.1 ibidem. It has been obtained by a combination of the Chen-Stein method for Poisson approximation and the Malliavin calculus of variations on the Poisson space. To state the result, we denote by $d_{\rm TV}(X,Y)$ the total variation distance of two non-negative integer-valued random variables $X$ and $Y$, i.e. $$d_{\rm TV}(X,Y)={1\over 2}\sum_{n=0}^\infty|\P(X=n)-\P(Y=n)|.$$
\begin{proposition}\label{prop:peccati}
Let $F\in L^2(\P_\eta)$ be such that $F$ belongs to ${\rm dom}\;D$, takes values only in $\{0,1,2,\ldots\}$ and satisfies $\E F=u>0$. Furthermore, let $Po(v)$ be a Poisson distributed random variable with mean $v>0$. Then
\begin{equation}\label{eq:totalvariation}
\begin{split}
d_{\rm TV}(F,Po(v)) &\leq  |u-v|+{1-e^{-u}\over u}\,\E|u-\langle DF,-DL^{-1}F\rangle_{L^2(\lambda)}|\\
 &\;\;\;\;\;\;\;+{1-e^{-u}\over u^2}\,\E\int\limits_{\Y}|D_yF(D_yF-1)D_yL^{-1}F|\;\lambda(\dint y).
\end{split}
\end{equation}
\end{proposition}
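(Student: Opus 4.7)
The plan is to follow the Peccati strategy that blends the Chen-Stein method for Poisson approximation with the Malliavin integration-by-parts formula on the Poisson space. The starting point is the well-known Chen-Stein inequality from \cite{BarbourHolstJanson92}: for any integer-valued $F$ with mean $u$,
$$d_{\rm TV}(F,Po(v))\leq |u-v|+\sup_{g\in\mathcal{G}_u}|\E[u\,g(F+1)-F\,g(F)]|,$$
where $\mathcal{G}_u$ is the family of solutions to the Stein equation associated with $Po(u)$. For these functions one has the standard bounds $\|g\|_\infty\leq 1$, $\|\Delta g\|_\infty\leq (1-e^{-u})/u$ and $\|\Delta^2 g\|_\infty\leq 2(1-e^{-u})/u^2$, with $\Delta g(k)=g(k+1)-g(k)$. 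The $|u-v|$ term in the bound is absorbed by the split $v\,g(F+1)-F\,g(F)=(v-u)g(F+1)+[u\,g(F+1)-F\,g(F)]$ together with $\|g\|_\infty\leq 1$, so the whole problem reduces to estimating $\E[u\,g(F+1)-F\,g(F)]$.

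Next, I would invoke the Malliavin integration-by-parts formula for Poisson functionals (see \cite{NualartVives90,Peccatietal2010}): for any $F\in{\rm dom}\,D$ and any bounded $\phi$,
$$\E[(F-\E F)\phi(F)]=\E\langle D\phi(F),-DL^{-1}F\rangle_{L^2(\lambda)}.$$
Applied with $\phi=g$ this yields
$$\E[u\,g(F+1)-F\,g(F)]=u\,\E[\Delta g(F)]-\E\langle Dg(F),-DL^{-1}F\rangle_{L^2(\lambda)}.$$
Now recall that $D_y g(F)=g(F+D_yF)-g(F)$ (the difference-operator representation), and decompose it as $D_y g(F)=\Delta g(F)\,D_yF+R_y$ with remainder $R_y:=g(F+D_yF)-g(F)-D_yF\,\Delta g(F)$. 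Substituting this decomposition and collecting the $\Delta g(F)$-terms produces
\begin{equation*}
\E[u\,g(F+1)-F\,g(F)]=\E\bigl[\Delta g(F)\bigl(u-\langle DF,-DL^{-1}F\rangle_{L^2(\lambda)}\bigr)\bigr]-\E\!\int_\Y R_y(-D_yL^{-1}F)\,\lambda(\dint y).
\end{equation*}
The first term on the right, after taking absolute values and using $\|\Delta g\|_\infty\leq (1-e^{-u})/u$, contributes exactly the second summand of \eqref{eq:totalvariation}.

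For the remainder, the key observation is that $F$ is $\mathbb{Z}_+$-valued, hence so is $D_yF$; a discrete Taylor expansion then gives
$$|R_y|=\Bigl|\sum_{i=1}^{D_yF-1}\bigl(\Delta g(F+i)-\Delta g(F)\bigr)\Bigr|\leq \tfrac{1}{2}\|\Delta^2 g\|_\infty\,D_yF(D_yF-1),$$
with the right-hand side vanishing when $D_yF\in\{0,1\}$. Combined with $\|\Delta^2 g\|_\infty\leq 2(1-e^{-u})/u^2$, this yields the third summand of \eqref{eq:totalvariation}. The step I expect to require the most care is the discrete-Taylor control of $R_y$: one must verify that the estimate holds regardless of the sign of $D_yF$ (if $F$ is only assumed to be integer-valued, not monotone in $\eta$), and one must ensure integrability so that Fubini and the Malliavin calculus can be applied. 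Once these points are settled, assembling the three terms reproduces \eqref{eq:totalvariation} exactly.
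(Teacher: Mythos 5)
The paper does not actually prove this proposition: it is quoted verbatim from Theorem 3.1 of \cite{Peccati11}, so the only available comparison is with that source, and your overall strategy (Chen--Stein inequality, Malliavin integration by parts $\E[(F-\E F)\phi(F)]=\E\langle D\phi(F),-DL^{-1}F\rangle_{L^2(\lambda)}$, the chain-rule decomposition $D_yg(F)=\Delta g(F)D_yF+R_y$, and a discrete Taylor bound on $R_y$) is exactly the strategy of that source. Your treatment of the $|u-v|$ term, of the second summand via $\|\Delta g\|_\infty\leq(1-e^{-u})/u$, and of the sign issue for $D_yF$ (the identity $|R_y|\leq\tfrac12\|\Delta^2 g\|_\infty\,D_yF(D_yF-1)$ does hold for negative $D_yF$ as well, since then $D_yF(D_yF-1)=|D_yF|(|D_yF|+1)$) are all fine.

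The genuine gap is the asserted bound $\|\Delta^2 g\|_\infty\leq 2(1-e^{-u})/u^2$ on the second forward difference of the Stein solution, which is precisely what you need to produce the prefactor $(1-e^{-u})/u^2$ in the third summand. No such bound holds uniformly over test sets $A\subset\{0,1,2,\ldots\}$. Indeed, for $A=\{j\}$ the solution is $g_{\{j\}}(k+1)=\tfrac{\pi_j}{u\,\pi_k}\bigl(\1(j\leq k)-P_k\bigr)$ with $\pi_i=e^{-u}u^i/i!$ and $P_k=\P(Po(u)\leq k)$; taking $j\approx u$ large gives $g_{\{j\}}(j)\approx-\tfrac{1}{2u}$, $g_{\{j\}}(j+1)\approx\tfrac{1}{2u}$, $g_{\{j\}}(j+2)\approx\tfrac{1}{2u}$, hence $\Delta^2g_{\{j\}}(j)\approx-1/u$, which is of order $u^{-1}$ and not $u^{-2}$. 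The only bound that follows from the classical estimate $\|\Delta g\|_\infty\leq(1-e^{-u})/u$ of \cite{BarbourHolstJanson92} is $\|\Delta^2g\|_\infty\leq 2\|\Delta g\|_\infty\leq 2(1-e^{-u})/u$, and the example shows this is sharp up to constants. Consequently your argument, as written, only yields the third summand with the prefactor $(1-e^{-u})/u$ in place of $(1-e^{-u})/u^2$. That weaker version would in fact suffice for every application in this paper (the relevant mean $\sigma_t$ converges to a finite positive limit, so the prefactors only affect unspecified constants), but it does not reproduce the stated inequality. To recover the $u^{-2}$ prefactor you must either establish a genuinely finer second-order property of the Stein solution or treat the remainder term by a different argument, as is done in \cite{Peccati11}; the step cannot be dispatched by the generic discrete Taylor estimate you invoke.
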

We note that if $d_{\rm TV}(F_t,Po(v))\rightarrow 0$ as $t\rightarrow\infty$ for a family $(F_t)_{t\geq 1}$ of Poisson functionals as in Proposition \ref{prop:peccati}, $(F_t)_{t\geq 1}$ converges in distribution to $Po(v)$; cf. Prop. 3.3 in \cite{Peccati11}. This holds, because the topology induced by $d_{\rm TV}$ on the class of probability distributions on the non-negative integers is strictly finer than the topology induced by convergence in distribution.

In order to evaluate the right-hand side in \eqref{eq:totalvariation}, we need the so-called {\em product formula for multiple Wiener-It\^o integrals} (see \cite{PeccatiTaqqu2010,Surgailis1984}). Before stating it, we introduce some further notation. Let $n_1,\hdots,n_m\in\N$ and let $f^{(i)}\in L_{\rm sym}^2(\lambda^{n_i})$ for $i=1,\hdots,m$, where $m\geq 1$ is a fixed integer. We denote the arguments of $f^{(i)}$ by $y^{(i)}_1,\hdots,y^{(i)}_{n_i}$ and let $\otimes_{i=1}^m f^{(i)}: \Y^{\sum_{i=1}^m n_i}\rightarrow\R$ be given by $$\otimes_{i=1}^m f^{(i)}(y_1^{(1)},\hdots,y_{n_m}^{(m)})=\prod_{i=1}^m f^{(i)}(y_1^{(i)},\hdots,y_{n_i}^{(i)}).$$ 
By $\Pi(n_1,\hdots,n_m)$ we denote the set of all partitions $\pi$ of the variables $$y_1^{(1)},\hdots,y_{n_1}^{(1)},\hdots,y_1^{(m)},\hdots,y_{n_m}^{(m)}$$ having the property that two variables with the same upper index are always in two different elements of $\pi$. We shall write $|\pi|$ for the number of elements of $\pi$. Let also $\Pi_{\geq 2}(n_1,\hdots,n_m)$ be the collection of all those partitions $\pi\in\Pi(n_1,\hdots,n_m)$ where every element of $\pi$ includes at least two variables.

For every $\pi\in\Pi(n_1,\hdots,n_m)$, the function $\left(\otimes_{i=1}^m f^{(i)}\right)_\pi: \Y^{|\pi|}\rightarrow\R$ is given by replacing all variables of $\otimes_{i=1}^m f^{(i)}$ that belong to the same element of $\pi$ by a new variable. For example, if $$(f^{(1)}\otimes f^{(2)}\otimes f^{(3)})(y_1^{(1)}, y_2^{(1)},y_1^{(2)},y_2^{(2)},y_1^{(3)})=f^{(1)}(y_1^{(1)}, y_2^{(1)})f^{(2)}(y_1^{(2)},y_2^{(2)}) f^{(3)}(y_1^{(3)})$$
and if $\pi=\{(y_1^{(1)},y_1^{(2)},y_1^{(3)}), (y_2^{(1)},y_2^{(2)})\}$, then 
$$(f^{(1)}\otimes f^{(2)}\otimes f^{(3)})_\pi(\hat{y}_1,\hat{y}_2)=f^{(1)}(\hat{y}_1,\hat{y}_2)f^{(2)}(\hat{y}_1,\hat{y}_2)f^{(3)}(\hat{y}_1).$$
We refer the reader to \cite{PeccatiTaqqu2010} for further details. 

\begin{proposition}\label{prop:productformula}
Let for $i=1,\hdots,m$, $f^{(i)}:\Y^{n_i}\rightarrow \R$ be bounded, non-negative and symmetric such that $\int_{\Y^{|\pi|}}(\otimes_{i=1}^m f^{(i)})_\pi\;\dint\lambda^{|\pi|}<\infty$ for all $\pi\in\Pi(n_1,\ldots,n_m)$. Then
\begin{equation}\label{eq:productformula}
\E \prod_{i=1}^m I_{n_i}(f^{(i)})=\sum_{\pi\in\Pi_{\geq 2}(n_1,\hdots,n_m)}\;\int\limits_{\Y^{|\pi|}} \left(\otimes_{i=1}^{m} f^{(i)}\right)_\pi \dint\lambda^{|\pi|}.
\end{equation} 
\end{proposition}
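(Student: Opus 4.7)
The plan is to reduce to the case of elementary kernels and then extend by a density argument. Call an \emph{elementary} symmetric kernel one given as a finite sum of tensor products $\1_{A_{j_1}}\otimes\cdots\otimes\1_{A_{j_n}}$ over pairwise disjoint sets $A_1,\ldots,A_N\in\cY_\lambda$ with the indices $j_1,\ldots,j_n$ distinct. On such kernels $I_n(f)$ is, by definition, the corresponding polynomial in the centered and (across disjoint $A_j$) independent random variables $\hat{\eta}(A_j)$. I would therefore establish the identity first in this algebraic setting and then approximate each $f^{(i)}$ in $L^2(\lambda^{n_i})$ by bounded elementary kernels, exploiting the assumed boundedness, and pass to the limit on both sides.

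In the elementary case, expand $\prod_{i=1}^m I_{n_i}(f^{(i)})$ into a finite sum of monomials of the form $\prod_j \hat\eta(A_j)^{k_j}$ and group the arguments $y_\ell^{(i)}$ according to which block $A_j$ each is assigned to. This yields a sum indexed by partitions $\pi$ of the set $\{y_\ell^{(i)}:1\le i\le m,\,1\le\ell\le n_i\}$; the vanishing of coefficients on diagonals of each $f^{(i)}$ forces $\pi\in\Pi(n_1,\ldots,n_m)$. Taking expectations and using independence of $\hat\eta$ on disjoint sets, each block of $\pi$ factors. A singleton block contributes $\E\hat\eta(A_j)=0$ and kills every partition with a singleton, whereas a block of size $r\ge 2$ contributes the centered Poisson moment $\E\hat\eta(A_j)^r=\sum_\sigma \lambda(A_j)^{|\sigma|}$, summed over set partitions $\sigma$ of $[r]$ whose blocks all have size at least two (the Touchard identity). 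Combining the block structure of $\pi$ with the subordinate Touchard refinements $\sigma$ produces exactly a sum over partitions $\pi'\in\Pi_{\geq 2}(n_1,\ldots,n_m)$, and each resulting term can be matched to $\int_{\Y^{|\pi'|}}(\otimes_i f^{(i)})_{\pi'}\,\dint\lambda^{|\pi'|}$.

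The main obstacle is the combinatorial bookkeeping in the second step: one must verify that the composition of ``blockwise assignment of variables to sets $A_j$'' with ``Touchard refinement within each block'' is a bijection onto $\Pi_{\geq 2}(n_1,\ldots,n_m)$ and that the weights collapse onto the prescribed contracted integrals. Surgailis's diagram formalism, used in the references cited for the product formula, is the cleanest way to carry out this accounting. Once the algebraic identity is in hand, the extension to general $f^{(i)}$ under the stated hypotheses is standard: convergence on the left-hand side follows from the isometry \eqref{eq:isometry} together with the hypercontractivity of multiple Poisson Wiener-It\^o integrals (upgrading $L^2$-approximation of each $I_{n_i}(f^{(i)})$ to $L^m$-convergence of the product), while convergence on the right-hand side follows from dominated convergence, invoking the assumed integrability of each contraction $(\otimes_i f^{(i)})_\pi$ for $\pi\in\Pi(n_1,\ldots,n_m)$.
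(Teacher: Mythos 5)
Your combinatorial computation for elementary kernels is sound in outline: it is essentially the diagram-formula argument of Peccati--Taqqu (Corollary 7.2 there) and Surgailis, which is all the paper itself invokes (it states the formula for simple kernels by citation and refers to those works for the extension). The genuine problem is your limit passage. Multiple Wiener--It\^o integrals on the Poisson space are \emph{not} hypercontractive: already for $n=1$ and $f=\1_A$ one has $\|I_1(f)\|_{L^2(\P_\eta)}=\lambda(A)^{1/2}$ while $\E I_1(f)^4=\lambda(A)+3\lambda(A)^2$, so $\|I_1(f)\|_{L^4(\P_\eta)}/\|I_1(f)\|_{L^2(\P_\eta)}\rightarrow\infty$ as $\lambda(A)\rightarrow 0$. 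More generally, $\E I_n(f)^4$ contains the term $\int_{\Y^n}f^4\,\dint\lambda^n$ (coming from the partition that merges all four copies of each variable), which is not controlled by $\|f\|_{L^2(\lambda^n)}^4$. Hence $L^2$-approximation of the kernels does not upgrade to $L^m$-convergence of the product, and the left-hand side of \eqref{eq:productformula} does not pass to the limit by the route you describe.

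A related symptom: your argument only uses the hypothesis $\int_{\Y^{|\pi|}}(\otimes_i f^{(i)})_\pi\,\dint\lambda^{|\pi|}<\infty$ for $\pi\in\Pi_{\geq 2}(n_1,\ldots,n_m)$, via dominated convergence on the right-hand side, whereas the proposition assumes it for \emph{all} $\pi\in\Pi(n_1,\ldots,n_m)$, singleton blocks included. That stronger hypothesis is precisely what controls the left-hand side: one proves alongside the identity the bound $\E\prod_i|I_{n_i}(f^{(i)})|\leq\sum_{\pi\in\Pi(n_1,\ldots,n_m)}\int_{\Y^{|\pi|}}(\otimes_i f^{(i)})_\pi\,\dint\lambda^{|\pi|}$ and uses it to justify exchanging limit and expectation, for instance via monotone approximation of the non-negative kernels, or via the pathwise representation of $I_n(f)$ as an alternating sum of integrals against the factorial measures of $\eta$ combined with the multivariate Mecke equation. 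Repair the extension step along these lines; as written, it does not go through.
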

\begin{remark}\rm The product formula \eqref{eq:productformula} is stated as Corollary 7.2 in \cite{PeccatiTaqqu2010} for the case that the $f^{(i)}$ are so-called simple functions, but can be extended to our setting. In this generality, it is also a corollary of the main finding in \cite{Surgailis1984}. 
\end{remark}

To simplify the notation and the arguments in the next section, let $\tilde{\Pi}(n_1,\hdots,n_m)$, respectively $\tilde{\Pi}_{\geq 2}(n_1,\hdots,n_m)$, be the set of all partitions $\pi\in\Pi(n_1,\hdots,n_m)$, respectively $\pi\in\Pi_{\geq 2}(n_1,\hdots,n_m)$, such that for every partition of $\{1,\hdots,m\}$ into two disjoint sets $M_1$ and $M_2$ there is an element of $\pi$ including variables with upper indexes $i_1\in M_1$ and $i_2\in M_2$.

\section{Proof of Theorem \ref{thm:general}}\label{secPOISSONAPPROXIMATION}

The basic idea to derive Theorem \ref{thm:general} is to consider the Poisson U-statistics $U_{A_t}$ given by
$$U_{A_t}=\frac{1}{k!}\sum_{(y_1,\hdots,y_k)\in\eta^k_{t,\neq}} \1(f(y_1,\hdots,y_k)\in A_t)$$
for a family $(A_t)_{t\geq 1}$ of Borel sets $A_t\subset\R_+$ with $x_{max}=\sup\limits_{t\geq 1}\sup\limits_{x\in A_t}x<\infty$ and define $h_1,\hdots,h_k$ by \eqref{eq:kernelsUstatistic}. We notice that for $q=1,\ldots,k$, $h_q\in L^2(\lambda_t^q)$. Indeed,
\begin{eqnarray*}
||h_q||_{L^2(\lambda^q)}^2 \!\!\!\!\!\! &  =& \!\!\!\! \frac{1}{(q!(k-q)!)^2}\int\limits_{\Y^q}\int\limits_{\Y^{k-q}} \! \1(f(y_1,\ldots,y_q,\hat{y}_1,\ldots,\hat{y}_{k-q})\in A_t)\; \lambda^{k-q}(\dint(\hat{y}_1,\ldots,\hat{y}_{k-q}))\\
& &  \times \int\limits_{\Y^{k-q}}\1(f(y_1,\ldots,y_q,\tilde{y}_1,\ldots,\tilde{y}_{k-q})\in A_t)\;\lambda^{k-q}(\dint(\tilde{y}_1,\ldots,\tilde{y}_{k-q}))\\
& & \qquad\lambda^q(\dint(y_1,\ldots,y_q))\\
&\leq & \frac{k!}{(q!(k-q)!)^2}\alpha_t(x_{max})r_t(x_{max})<\infty.
\end{eqnarray*}
Combining this with Campbell's theorem for point processes and some combinatorial arguments yields that $U_{A_t}\in L^2(\P_{\eta_t})$ and that \eqref{eq:varianceGeneral} holds. Hence, $U_{A_t}$ has finite Wiener-It\^o chaos decomposition with kernels $h_1,\ldots,h_k$.

\begin{proposition}\label{prop:PoissonApprox}
Assume that there is a constant $0<\sigma<\infty$ with 
$$\sigma_t=\frac{1}{k!}\int\limits_{\Y^k} \1(f(y_1,\hdots,y_k)\in A_t)\;\lambda_t^k(\dint(y_1,\hdots,y_k))\rightarrow \sigma \ \text{ as }\ t\rightarrow\infty$$
and suppose that
$$ \rho_t=\sup_{\substack{y_1,\hdots,y_{k-j}\in\Y\\ 1\leq j\leq k-1}}\lambda_t^j(\{(\hat{y}_1,\hdots,\hat{y}_j)\in\Y^j: f(\hat{y}_1,\hdots, \hat{y}_j,y_1,\hdots,y_{k-j})\in A_t\})\rightarrow 0$$
as $t\rightarrow\infty$. Then
$$d_{\rm TV}(U_{A_t},Po(\sigma))\leq |\sigma-\sigma_t| + C_k \frac{1-e^{-\sigma_t}}{\sigma_t} \left(1+\frac{1}{\sigma_t}\right) \sqrt{\sigma_t\left(\rho_t+\rho_t^3\right)}$$
with a constant $C_k$ only depending on $k$.
\end{proposition}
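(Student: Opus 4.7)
The plan is to apply Proposition~\ref{prop:peccati} to $F=U_{A_t}$ with $u=\E U_{A_t}=\sigma_t$ (by Campbell's theorem) and $v=\sigma$. The first summand in Peccati's bound is exactly $|\sigma-\sigma_t|$, and what remains is to dominate, by $C_k\sqrt{\sigma_t(\rho_t+\rho_t^3)}$, the two Malliavin remainders
\[
T_2=\E\bigl|\sigma_t-\langle DU_{A_t},-DL^{-1}U_{A_t}\rangle_{L^2(\lambda_t)}\bigr|,\quad
T_3=\E\int_{\Y}\bigl|D_yU_{A_t}(D_yU_{A_t}-1)D_yL^{-1}U_{A_t}\bigr|\,\lambda_t(\dint y),
\]
after which the prefactors $(1-e^{-\sigma_t})/\sigma_t$ and $(1-e^{-\sigma_t})/\sigma_t^2$ supplied by \eqref{eq:totalvariation} produce the claimed shape of the estimate.

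The finite chaos decomposition $U_{A_t}=\sigma_t+\sum_{q=1}^k I_q(h_q)$, with $h_q$ given by \eqref{eq:kernelsUstatistic}, yields the explicit formulas $D_yU_{A_t}=\sum_{q=1}^k q\,I_{q-1}(h_q(y,\cdot))$ and $-D_yL^{-1}U_{A_t}=\sum_{q=1}^k I_{q-1}(h_q(y,\cdot))$. For $T_2$ I would apply Cauchy--Schwarz and then the Stein-type identity $\E\langle DU_{A_t},-DL^{-1}U_{A_t}\rangle_{L^2(\lambda_t)}=\V(U_{A_t})=\sum_{q=1}^k q!\,\|h_q\|_{L^2(\lambda_t^q)}^2$, reducing the problem to bounding $|\V(U_{A_t})-\sigma_t|$ and $\V(\langle DU_{A_t},-DL^{-1}U_{A_t}\rangle_{L^2(\lambda_t)})$. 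The former is immediate: the top kernel contributes $k!\,\|h_k\|_{L^2(\lambda_t^k)}^2=\sigma_t$ exactly, while the lower kernels satisfy $\|h_q\|_{L^2(\lambda_t^q)}^2\leq\frac{k!}{(q!(k-q)!)^2}\sigma_t\rho_t$ for $1\leq q\leq k-1$ by the same estimate as in the paragraph preceding the proposition, giving $|\V(U_{A_t})-\sigma_t|\leq C_k\sigma_t\rho_t$. For $T_3$ I would expand $D_yU_{A_t}(D_yU_{A_t}-1)D_yL^{-1}U_{A_t}$ by multilinearity into a finite sum of products of three multiple stochastic integrals (with the extra $-1$ contributing additional lower-order terms), apply the triangle inequality, and then control each resulting expectation.

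The heart of the argument is the combinatorial bookkeeping supplied by the product formula of Proposition~\ref{prop:productformula}. Every expectation appearing in $\V(\langle DU_{A_t},-DL^{-1}U_{A_t}\rangle_{L^2(\lambda_t)})$ or in $T_3$ takes the form $\int_{\Y^{|\pi|}}(\otimes_i h^{(i)})_\pi\,d\lambda_t^{|\pi|}$ for a partition $\pi\in\Pi_{\geq 2}$, with each $h^{(i)}$ proportional to an indicator of $\{f\in A_t\}$ on a block of at most $k$ variables. The key observation is that such an integral is bounded by $C_k\sigma_t\rho_t^{a(\pi)}$ for some integer $a(\pi)\geq 1$: integrating out the coordinates of any one block yields at most $k!\sigma_t$ by the definition of $\sigma_t$, and each subsequent integration over a coordinate shared between two or more indicators contributes at most a factor $\rho_t$ by the definition \eqref{eq:rtdef} of $r_t$ transposed to the set $A_t$. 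Partitions that fail to connect all the factor groups (those in $\Pi_{\geq 2}\setminus\tilde{\Pi}_{\geq 2}$) are precisely cancelled by the deterministic subtraction of $\sigma_t$ in $T_2$ and, in $T_3$, by the factor $D_yU_{A_t}-1$ which removes the diagonal contribution. Counting the internal integrations that can occur in the four-factor expansion for $T_2$ and the three-factor expansion for $T_3$ (including the outer $\lambda_t(\dint y)$-integration) shows that the surviving powers of $\rho_t$ lie between $1$ and $3$, which explains why $\rho_t^3$ appears in the final bound.

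The main obstacle is precisely this combinatorial step: identifying which partitions actually contribute, confirming the cancellation of the ``disconnected'' ones against the centring, and carefully tracking the highest power of $\rho_t$ that remains in the triple product appearing in $T_3$. All other elements (Cauchy--Schwarz for $T_2$, the Stein identity, the elementary estimate on $\|h_q\|^2_{L^2(\lambda_t^q)}$, and the final assembly of Peccati's bound) are routine once this combinatorial estimate is in place.
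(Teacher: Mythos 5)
Your overall strategy coincides with the paper's: apply Proposition \ref{prop:peccati} with $u=\sigma_t$ and $v=\sigma$, reduce everything to moments of products of multiple Wiener--It\^o integrals via the product formula of Proposition \ref{prop:productformula}, and control each partitioned integral $\int_{\Y^{|\pi|}}(\otimes_i h_{n_i})_\pi\,\dint\lambda_t^{|\pi|}$ by $C_k\sigma_t(\rho_t+\rho_t^{a})$ for connected partitions --- this key estimate is exactly the paper's Lemma \ref{lem:intkernels}. Your treatment of the second Peccati term is also essentially the paper's: split off $|\V\,U_{A_t}-\sigma_t|$, which is $O(\sigma_t\rho_t)$ because $k!\,\|h_k\|^2_{L^2(\lambda_t^k)}=\sigma_t$ exactly while the lower-order kernels carry a factor $\rho_t$, and bound the remaining fluctuation by a variance computed with the product formula, in which only connected partitions survive.

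The gap is in the third term. You propose to expand $D_yU_{A_t}(D_yU_{A_t}-1)D_yL^{-1}U_{A_t}$ into triple products of multiple integrals, apply the triangle inequality, and evaluate ``each resulting expectation'' by the product formula. But the quantity to bound is $\E\int_\Y|D_yU_{A_t}(D_yU_{A_t}-1)D_yL^{-1}U_{A_t}|\,\lambda_t(\dint y)$: the absolute value sits inside the expectation, the triangle inequality only moves it inside the finite sum, and you are left with terms of the form $\E|I_{a}(\cdot)I_b(\cdot)I_c(\cdot)|$, which Proposition \ref{prop:productformula} does not compute (it gives the signed expectation $\E[I_aI_bI_c]$, and the individual compensated integrals change sign). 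Dropping the absolute value would require a fixed sign of the integrand; while $D_yU_{A_t}(D_yU_{A_t}-1)\geq 0$ because $D_yU_{A_t}$ is a non-negative integer, the sign of $-D_yL^{-1}U_{A_t}=\sum_q I_{q-1}(h_q(y,\cdot))$ is not obvious and you give no argument for it. Relatedly, your claim that the disconnected partitions are cancelled ``by the factor $D_yU_{A_t}-1$'' is not verified for the triple product. The paper avoids both problems by a Cauchy--Schwarz step in $(y,\omega)$, bounding the third term by $\bigl(\E\int_\Y(D_yU_{A_t}(D_yU_{A_t}-1))^2\,\lambda_t(\dint y)\bigr)^{1/2}\bigl(\E\int_\Y(D_yL^{-1}U_{A_t})^2\,\lambda_t(\dint y)\bigr)^{1/2}$; the first factor equals $R_4-2R_3+R_2$, where $\int_\Y\E(D_yU_{A_t})^\ell\,\lambda_t(\dint y)=k\sigma_t+R_\ell$ and the leading terms $k\sigma_t$ cancel, leaving $O(\sigma_t(\rho_t+\rho_t^3))$, while the second factor equals $\sum_q(q-1)!\,\|h_q\|^2_{L^2(\lambda_t^q)}\leq\V\,U_{A_t}$. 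This Cauchy--Schwarz step is also what produces the square root $\sqrt{\sigma_t(\rho_t+\rho_t^3)}$ and the factor $1+1/\sigma_t$ in the stated bound, which your additive expansion would not yield. You need to insert this step (or an equivalent positivity argument for $-D_yL^{-1}U_{A_t}$) to close the proof.
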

To prepare for the proof of Proposition \ref{prop:PoissonApprox} we need the following lemma.
\begin{lemma}\label{lem:intkernels}
Let $\ell\geq 2$, let $1\leq n_i\leq k$ for $i=1,\hdots,\ell$ and let $\pi\in \tilde{\Pi}(n_1,\hdots,n_\ell)$. Then 
$$\int\limits_{\Y^{|\pi|}} \left(\otimes_{i=1}^\ell h_{n_i}\right)_\pi \dint\lambda_t^{|\pi|}=\frac{1}{(k!)^{\ell-1}}\sigma_t$$
if and only if $n_1=\hdots=n_\ell=k$ and $|\pi|=k$. Otherwise,
\begin{eqnarray*}
\int\limits_{\Y^{|\pi|}} \left(\otimes_{i=1}^\ell h_{n_i}\right)_\pi \dint\lambda_t^{|\pi|} & \leq & k!\left(\prod_{i=1}^\ell \frac{1}{n_i!(k-n_i)!}\right) \sigma_t \left(\rho_t+\rho_t^{\ell-1}\right)\\
& \leq & k! \sigma_t \left(\rho_t+\rho_t^{\ell-1}\right).
\end{eqnarray*}
\end{lemma}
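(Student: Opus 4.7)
The plan is to unfold the definition of the kernels so that everything reduces to an integral of a product of indicators $\1(f\in A_t)$, which can then be either computed exactly (in the distinguished case) or bounded by a greedy peeling argument exploiting the connectedness encoded in $\pi\in\tilde{\Pi}$.

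First I would substitute \eqref{eq:kernelsUstatistic} into $(\otimes_{i=1}^\ell h_{n_i})_\pi$ to obtain
$$
\int_{\Y^{|\pi|}} \bigl(\otimes_{i=1}^\ell h_{n_i}\bigr)_\pi\,d\lambda_t^{|\pi|}
= \Bigl(\prod_{i=1}^\ell \frac{1}{n_i!(k-n_i)!}\Bigr)\cdot I,
$$
where $I$ is the integral of $\prod_{i=1}^\ell \1(f(\cdot)\in A_t)$ against the product measure on the enlarged variable set consisting of the $|\pi|$ ``main'' variables (one per $\pi$-block) and, for every $i\in\{1,\ldots,\ell\}$, the $k-n_i$ ``dummy'' variables local to the factor $h_{n_i}$. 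Each indicator factor $\1(f_i\in A_t)$ depends on exactly $k$ of these variables.

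In the distinguished case $n_1=\ldots=n_\ell=k$, $|\pi|=k$, there are no dummies; the $k\ell$ main variables are forced into $k$ blocks with the constraint that variables with the same upper index lie in distinct blocks, and by cardinality each block must contain exactly one representative from every factor. Symmetry of $h_k=(k!)^{-1}\1(f\in A_t)$ and idempotency of the indicator then collapse the integrand to $(k!)^{-\ell}\1(f\in A_t)$ in the $k$ remaining variables, whose $\lambda_t^k$-integral equals $(k!)^{-\ell}\cdot k!\,\sigma_t=(k!)^{-(\ell-1)}\sigma_t$.

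In all remaining cases I would use that the $\tilde{\Pi}$-condition is precisely the statement that the graph on factors $\{1,\ldots,\ell\}$ with an edge between $i_1\neq i_2$ whenever some $\pi$-block contains variables from both indices is connected. A breadth-first traversal from factor $1$ therefore reorders the factors so that every $i\geq 2$ shares a variable with some $i'<i$. Let $N_i$ denote the set of variables appearing in $f_i$ but in no earlier $f_{i'}$; the $N_i$ partition the variable set, with $|N_1|=k$ and $|N_i|\leq k-1$ for $i\geq 2$. Applying Fubini in the order $N_\ell,N_{\ell-1},\ldots,N_1$, only $f_i$ depends on $N_i$ at step $i$ (by the defining property of $N_i$), so the inner integral equals $1$ if $|N_i|=0$ and is at most $\rho_t$ if $1\leq |N_i|\leq k-1$ by the very definition \eqref{eq:rtdef} of $\rho_t$; the final integration over $N_1$ gives $k!\,\sigma_t$. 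Writing $s:=\#\{i\geq 2:|N_i|\geq 1\}$, this produces $I\leq \rho_t^{\,s}\cdot k!\,\sigma_t$.

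The main subtlety, and the step I expect to require the most care, is showing $s\geq 1$ outside the distinguished case. If $s=0$, i.e.\ $|N_i|=0$ for every $i\geq 2$, then the dummies of every $f_i$ (which are local to $i$) must be absent, forcing $n_i=k$ for $i\geq 2$; a downward induction on the factors then shows that every main variable of every factor already lies among the main variables of factor $1$, which combined with $|N_1|=k$ forces $n_1=k$ and $|\pi|=k$, i.e.\ the distinguished case. Therefore $s\in\{1,\ldots,\ell-1\}$ in all other situations, so $\rho_t^{\,s}\leq\max(\rho_t,\rho_t^{\,\ell-1})\leq\rho_t+\rho_t^{\,\ell-1}$. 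Reintroducing the prefactor (and using the trivial bound $\prod_i(n_i!(k-n_i)!)^{-1}\leq 1$) yields the two stated inequalities.
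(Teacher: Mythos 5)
Your proof is correct and follows essentially the same route as the paper's: unfold the kernels into an enlarged partition of indicator functions, compute the distinguished case exactly, and peel off the factors one at a time, bounding each inner integral by $1$ or $\rho_t$ and ending with $k!\,\sigma_t$. Your BFS ordering and the explicit verification that the $\rho_t$-exponent $s$ is at least $1$ outside the distinguished case merely make precise a step the paper's terser iteration leaves implicit.
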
 
\begin{proof}
Recall \eqref{eq:kernelsUstatistic}, fix $\pi\in\tilde{\Pi}(n_1,\hdots,n_\ell)$ and construct another partition $\pi^*$ from $\pi$ by adding all variables $y_j^{(i)}$ over which the integration runs in the definition of $h_{n_i}$ as elements $\{y_j^{(i)}\}$ to $\pi$ for $i=1,\hdots,\ell$. Then
\begin{equation}\label{eq:intkernels}
\begin{split}
&\int\limits_{\Y^{|\pi|}} \left(\otimes_{i=1}^\ell h_{n_i}\right)_\pi \dint\lambda_t^{|\pi|}\\ &= \left(\prod_{i=1}^\ell\frac{1}{n_i!(k-n_i)!}\right) \int\limits_{\Y^{|\pi^*|}}(\otimes_{i=1}^\ell \1(f(y_1^{(i)},\hdots,y_k^{(i)})\in A_t))_{\pi^*}\;\lambda_t^{|\pi^*|}(\dint(\hat{y}_1,\ldots,\hat{y}_{|\pi^*|})),
\end{split}
\end{equation}
where $\hat{y}_1,\hdots,\hat{y}_{|\pi^*|}$ are the replacing variables in the definition of $(\hdots)_{\pi^*}$. 
If $\pi\in \tilde{\Pi}(k,\hdots,k)$ and $|\pi|=k$, the right hand side in \eqref{eq:intkernels} simplifies to
$$\frac{1}{(k!)^\ell}\int\limits_{\Y^k}\1(f(\hat{y}_1,\hdots,\hat{y}_k)\in A_t)\;\lambda_t^k(\dint(\hat{y}_1,\hdots,\hat{y}_k))=\frac{1}{(k!)^{\ell-1}}\sigma_t,$$
which proves the first claim. For the second claim, one starts with the last function of the tensor product on the right hand side in \eqref{eq:intkernels}. If all of its variables $\hat{y}_i$ also occur in other functions of the tensor product, we can bound it by $1$. Otherwise, the integration over the variables that only occur in this function yields a positive real number less or equal than $\rho_t$ by the definition of $\rho_t$ in Proposition \ref{prop:PoissonApprox}. Iterating this procedure, a power of $\rho_t$ with an exponent between $1$ and $\ell-1$ as well as the integral $\int_{\Y^k}\1(f(\hat{y}_1,\hdots,\hat{y}_k)\in A_t)\;\lambda_t^k(\dint(\hat{y}_1,\hdots,\hat{y}_k))=k!\sigma_t$ remain. This completes the proof.
\end{proof}

As a consequence of the proof of Lemma \ref{lem:intkernels} we conclude that $\int_{\Y^{|\pi|}}(\otimes_{i=1}^\ell h_{n_i})\;\dint\lambda_t^{|\pi|}<\infty$ for all $\pi\in\Pi(n_1,\ldots,n_\ell)$ with $1\leq n_i\leq k$, $i=1,\ldots,\ell$, $\ell\geq 2$. If $\pi\in\tilde{\Pi}(n_1,\ldots,n_\ell)$, this is clear from the Lemma. Otherwise, one can write the integral as product of integrals of the type considered above. This implies that the assumptions of the product formula in Proposition \ref{prop:productformula} are satisfied with $f^{(i)}=h_{n_i}$ there. This is used without further comments several times below.

\begin{proof}[Proof of Proposition \ref{prop:PoissonApprox}.] Combining the general bound \eqref{eq:totalvariation} from Proposition \ref{prop:peccati} with the triangle inequality and the Cauchy-Schwarz inequality, we obtain
\begin{equation}\label{eq:bound0}
\begin{split}
 &d_{\rm TV}(U_{A_t},Po(\sigma))\leq  |\sigma-\sigma_t|\\ & \quad+\frac{1-e^{-\sigma_t}}{\sigma_t}(\;\underbrace{|\sigma_t-\V\,U_{A_t}|}_{=:T_1}+\underbrace{\E|\V\,U_{A_t}-\langle DU_{A_t},-DL^{-1}U_{A_t}\rangle_{L^2(\lambda_t)}|}_{=:T_2}\;) \\
 & \quad+ \frac{1-e^{-\sigma_t}}{\sigma_t^2}(\;\underbrace{\E\int\limits_{\Y} \left(D_yU_{A_t}(D_yU_{A_t}-1)\right)^2\lambda_t(\dint y)}_{=:T_3}\;\underbrace{\E\int\limits_{\Y} (D_yL^{-1}U_{A_t})^2\lambda_t(\dint y)}_{=:T_4}\;)^\frac{1}{2}.
\end{split}
\end{equation}
In the following, we bound the expressions $T_1$--$T_4$ on the right-hand side in \eqref{eq:bound0} to obtain a rate of convergence. To start with $T_1$, recall \eqref{eq:varianceGeneral} and apply Lemma \ref{lem:intkernels} to conclude that
\begin{equation}\label{eq:varUat}
\V\,U_{A_t}=\sum_{q=1}^k q! ||h_q||_{L^2(\lambda_t^q)}^2= \sigma_t + R_{\V\,U_{A_t}} 
\end{equation}
with $R_{\V\,U_{A_t}}$ satisfying
$$|R_{\V\,U_{A_t}}|\leq \sum_{q=1}^{k-1}\frac{k!}{q! ((k-q)!)^2}\sigma_t \rho_t,$$
so that
\begin{equation}\label{eq:bound1}
T_1=|\sigma_t - \V\,U_{A_t}| \leq \sum_{q=1}^{k-1}\frac{k!}{q! ((k-q)!)^2}\sigma_t\rho_t.
\end{equation}
We turn now to $T_2$. By the definition of the Malliavin operators $D$ and $L^{-1}$, the triangle inequality and the Cauchy-Schwarz inequality, we obtain
\begin{eqnarray*}
&&\E |\V\,U_{A_t}-\langle D U_{A_t},-D L^{-1}U_{A_t}\rangle_{L^2(\lambda_t)}|\\
 & = & \E\left|\sum_{q=1}^k q! ||h_q||_{L^2(\lambda_t^q)}^2 -\langle \sum_{q=1}^k q I_{q-1}(h_q(y,\cdot)),\sum_{q=1}^kI_{q-1}(h_q(y,\cdot))\rangle_{L^2(\lambda_t)}\right|\\
& \leq & \sum_{i,j=1}^k i\E|\langle I_{i-1}(h_i(y,\cdot)),I_{j-1}(h_j(y,\cdot))\rangle_{L^2(\lambda_t)}-\E \langle I_{i-1}(h_i(y,\cdot)),I_{j-1}(h_j(y,\cdot))\rangle_{L^2(\lambda_t)} |\\ &\leq&  \sum_{i,j=1}^k i \sqrt{R_{ij}}\\
\end{eqnarray*}
with
\begin{eqnarray*}
R_{ij} & = &\E\int\limits_{\Y}\int\limits_{\Y} I_{i-1}(h_i(y_1,\cdot))I_{i-1}(h_i(y_2,\cdot))I_{j-1}(h_j(y_1,\cdot))I_{j-1}(h_j(y_2,\cdot))\;\lambda_t(\dint y_1)\lambda_t(\dint y_2)\\ &&\hspace{2cm}-\left(\E \langle I_{i-1}(h_i(y,\cdot)),I_{j-1}(h_j(y,\cdot))\rangle_{L^2(\lambda_t)}\right)^2
\end{eqnarray*}
for $i,j=1,\hdots,k$. These expressions can be evaluated further using the product formula in Proposition \ref{prop:productformula} and adding the variables $y_1$ and $y_2$ to the partions similar as in \cite{RS11}. For $i\neq j$, the second term in the expression for $R_{ij}$ vanishes by the isometry relation \eqref{eq:isometry} and we obtain by the product formula for the first term expressions  only involving partitions $\pi\in\tilde{\Pi}_{\geq 2}(i,i,j,j)$. This is caused by the fact that $y_1$ and $y_2$ are fixed and all elements of a partition $\pi\in\Pi_{\geq 2}(i,i,j,j)\setminus\tilde{\Pi}_{\geq 2}(i,i,j,j)$ must include either variables from the first and the third or the second and fourth function, which is not possible because of $i\neq j$. For $i=j$, all involved partitions $\pi\in\Pi_{\geq 2}(i,i,i,i)\setminus\tilde{\Pi}_{\geq 2}(i,i,i,i)$ cancel out with the second term, which equals $((i-1)!)^2||h_i||^4_{L^2(\lambda_t^i)}$ in this case. Because of the fixed variables $y_1$ and $y_2$, all partitions satisfy $|\pi|>\max\{i,j\}$. This finally leads to
$$R_{ij}\leq \sum_{\substack{\pi\in \tilde{\Pi}_{\geq 2}(i,i,j,j)\\ |\pi|>\max\{i,j\}}} \; \int\limits_{\Y^{|\pi|}} (h_i\otimes h_i\otimes h_j \otimes h_j)_{\pi}\;\dint\lambda_t^{|\pi|}.$$
Hence, another application of Lemma \ref{lem:intkernels} yields the bound
\begin{equation}\label{eq:bound2}
T_2=\E |\V\,U_{A_t}-\langle D U_{A_t},-D L^{-1}U_{A_t}\rangle_{L^2(\lambda_t)}|\leq A_k k!\sigma_t (\rho_t+\rho_t^3)
\end{equation}
with $A_k:=\sum_{i,j=1}^k iN_{ij}$ where $N_{ij}$ is the cardinality of $\tilde{\Pi}_{\geq 2}(i,i,j,j)$. For $T_3$ in \eqref{eq:bound0} we find 
\begin{eqnarray*}
&&\E\int\limits_{\Y} (D_yU_{A_t}(D_yU_{A_t}-1))^2\;\lambda_t(\dint y)\\ & = & \int\limits_{\Y} \E(D_yU_{A_t})^4-2\E (D_yU_{A_t})^3+\E (D_yU_{A_t})^2 \ \lambda_t(\dint y)\\
& = & \int\limits_{\Y} \E(\sum_{q=1}^k q I_{q-1}(h_q(y,\cdot)) )^4-2\E (\sum_{q=1}^k q I_{q-1}(h_q(y,\cdot)))^3\\
& & \hspace{2cm}+\E (\sum_{q=1}^k q I_{q-1}(h_q(y,\cdot)))^2 \ \lambda_t(\dint y). 
\end{eqnarray*}
From Proposition \ref{prop:productformula} and Lemma \ref{lem:intkernels}, it follows that
$$\int\limits_{\Y} \E(\sum_{q=1}^k q I_{q-1}(h_q(y,\cdot)))^\ell \; \lambda_t(\dint y)= k\sigma_t + R_\ell,\qquad\ell\in\{2,3,4\} $$ 
with $|R_\ell|\leq k^{\ell-1}A_k k!\sigma_t(\rho_t+\rho_t^{\ell-1})$, which readily implies the bound
\begin{equation}\label{eq:bound3}
T_3=\E\int\limits_{\Y} (D_yU_{A_t}(D_yU_{A_t}-1))^2 \;\lambda_t(\dint y) \leq 4k^3 A_k k!\sigma_t(\rho_t+\rho_t^3).
\end{equation}
Finally, we turn to $T_4$. Here, we have that
$$T_4=\E \int\limits_{\Y} (D_yL^{-1}U_{A_t})^2\;\lambda_t(\dint y)=\sum_{q=1}^k(q-1)! ||h_q||_{L^2(\lambda_t^q)}^2\leq \sum_{q=1}^kq! ||h_q||_{L^2(\lambda_t^q)}^2=\V\,U_{A_t},$$ which in view of \eqref{eq:varUat} leads to a bound for $T_4$. Plugging this together with the bounds \eqref{eq:bound1}--\eqref{eq:bound3} for $T_1$--$T_3$ into \eqref{eq:bound0}, leads to the desired result and completes the proof of Proposition \ref{prop:PoissonApprox}.
\end{proof}

\begin{remark}\rm
Convergence of $U_{A_t}$ to a Poisson distributed random variable as $t\rightarrow\infty$ can also be shown by the method of moments or cumulants. Using Lemma \ref{lem:intkernels}, some computation shows that the cumulants of $U_{A_t}$ converge to that of a Poisson distributed random variable; see also Theorem 4.11 in \cite{LRPeccati} for an attempt in this direction in a very special case. This technique seems to be easier than the proof above from a technical point of view. However, it gives only a weaker result, since one does not obtain a rate of convergence in this way.
\end{remark}

\begin{proof}[Proof of Theorem \ref{thm:general}.]
Taking $A_t=[0,xt^{-\gamma}]$, we see that the events $\{U_{A_t}\leq m-1\}$ and $\{t^{\gamma}F_t^{(m)}>x\}$ are equivalent. Thus, Proposition \ref{prop:PoissonApprox} can be applied and yields the bound 
\begin{eqnarray*}
&& \left|\P(t^\gamma F_t^{(m)}>x)-e^{-\beta x^{\tau}}\sum_{i=0}^{m-1}\frac{(\beta x^{\tau})^i}{i!}\right|\\ & \leq & |\beta x^{\tau}-\alpha_t(x)|+ C_k \frac{1-e^{-\alpha_t(x)}}{\alpha_t(x)}\left(1+\frac{1}{\alpha_t(x)}\right) \sqrt{\alpha_t(x)\left(r_t(x)+r_t(x)^3\right)}\notag
\end{eqnarray*} 
with a constant $C_k>0$ only depending on $k$. By \eqref{eq:alpha} and \eqref{eq:condition} there are constants $\hat{C}_{f,x},\tilde{C}_{f,x}>0$ depending on the function $f$ and on $x$ such that $$C_k \frac{1-e^{-\alpha_t(x)}}{\alpha_t(x)}  \left(1+\frac{1}{\alpha_t(x)}\right) \sqrt{\alpha_t(x)}\leq \hat{C}_{f,x}$$ and $$\sqrt{r_t(x)+r_t(x)^3}\leq\tilde{C}_{f,x}\sqrt{r_t(x)}$$ for $t\geq 1$. Putting $C_{f,x}=\hat{C}_{f,x}\cdot\tilde{C}_{f,x}$ implies part b) of the theorem. For the proof of a) we define the two set classes
 $${\cal I}=\{I=(a,b]: 0\leq a \leq b<\infty\} \; \text{ and } \; {\cal V}=\{V=\bigcup_{i=1}^n I_i:I_i\in {\cal I},\,i=1,\hdots,n,\,n\in\N\}.$$
(By convention, $(a,a]=\emptyset$.) From \cite[Thm. 16.29]{Kallenberg} we infer that convergence in distribution of the re-scaled point processes $t^\gamma \xi_t$ to the Poisson point process $\xi$ with intensity measure $\nu$ is implied by the two conditions
\begin{equation}\label{eq:conditionU}
\lim_{t\rightarrow\infty} \P(\xi_t(t^{-\gamma}V) =\emptyset)=\P(\xi(V) =\emptyset)=e^{-\nu(V)}, \qquad V\in {\cal V}
\end{equation}
and
\begin{equation}\label{eq:conditionI}
 \lim_{t\rightarrow\infty} \P(\xi_t(t^{-\gamma}I)>1)= \P(\xi(I)>1)=1-(1+\nu(I))e^{-\nu(I)}, \qquad I\in {\cal I},
\end{equation}
which are to be checked in the following. For $V\in{\cal V}$, we can assume without loss of generality that $V$ can be written as
$$V=\bigcup_{i=1}^n(a_i,b_i]\quad{\rm with}\quad 0 < a_1 < b_1 < a_2 < b_2 < \hdots < a_n < b_n.$$
We take now $A_t= t^{-\gamma}V$ in Proposition \ref{prop:PoissonApprox} and see that
\begin{eqnarray*}
\sigma_t & = & \frac{1}{k!}\int\limits_{\Y^k} \1(f(y_1,\hdots,y_k)\in t^{-\gamma}V)\;\lambda_t^k(\dint(y_1,\hdots,y_k))\\
& = & \frac{1}{k!} \sum_{i=1}^n\;\int\limits_{\Y^k} \1(f(y_1,\hdots,y_k)\in t^{-\gamma} (a_i,b_i])\;\lambda_t^k(\dint(y_1,\hdots,y_k))\\
& = & \sum_{i=1}^n (\alpha_t(b_i)-\alpha_t(a_i))\\
& \rightarrow & \sum_{i=1}^n (\beta b_i^{\tau}-\beta a_i^{\tau})= \nu(V)\qquad{\rm as}\qquad t\rightarrow\infty
\end{eqnarray*}
by condition \eqref{eq:alpha}. Moreover,
\begin{eqnarray*}
\rho_t & = & \sup_{\substack{y_1,\hdots,y_{k-j}\in\Y\\ 1\leq j\leq k-1}}\lambda_t^j(\{(\hat{y}_1,\hdots,\hat{y}_j)\in\Y^j: f(y_1,\hdots,y_{k-j},\hat{y}_1,\hdots, \hat{y}_j)\in t^{-\gamma}V\})\\
& \leq & \sup_{\substack{y_1,\hdots,y_{k-j}\in\Y\\ 1\leq j\leq k-1}}\lambda_t^j(\{(\hat{y}_1,\hdots,\hat{y}_j)\in\Y^j: f(y_1,\hdots,y_{k-j},\hat{y}_1,\hdots, \hat{y}_j)\leq t^{-\gamma} b_n\})
\end{eqnarray*}
and condition \eqref{eq:condition} implies that $\rho_t\rightarrow 0$ as $t\rightarrow\infty$. This shows that the assumptions of Proposition \ref{prop:PoissonApprox} are satisfied, whence
$$d_{\rm TV}(\xi_t(t^{-\gamma} V),Po(\nu(V)))=d_{\rm TV}(U_{t^{-\gamma}V},Po(\nu(V)))\rightarrow 0\qquad {\rm as}\qquad t\rightarrow\infty.$$
Since ${\cal I}\subset{\cal V}$, this shows \eqref{eq:conditionU} and \eqref{eq:conditionI} and completes the proof.
\end{proof}



\begin{thebibliography}{30}\small

\bibitem{BR10}
{\sc I. B\'ar\'any and M. Reitzner} (2010): {\em Poisson polytopes}, Ann. Probab. \textbf{38}, 1507--1531.

\bibitem{Barbour88}
{\sc A.D. Barbour} (1988): {\em Stein's method and Poisson process convergence}, J. Appl. Probab. \textbf{25A}, 175–-184.

\bibitem{BarbourBrown}
{\sc A.D. Barbour and T.C. Brown (1992)}: {\em Stein's method and point process approximation}, Stoch. Proc. Appl. \textbf{43}, 9–-31.

\bibitem{BarbourHolstJanson92}
{\sc A.D. Barbuor, L. Holst and S. Janson (1992)}: {\em Poisson approximation}, Oxford University Press, Oxford.

\bibitem{BaumstarkLast}
{\sc V. Baumstark and G. Last (2009)}: {\em Gamma distributions for stationary Poisson flat processes}, Adv. Appl. Probab. \textbf{41}, 911--939.

\bibitem{DFR11}
{\sc L. Decreusefond, E. Ferraz, H. Randriam and A. Vergne} (2011): {\em Simplicial homology of random configurations}, arXiv: 1103.4457 [math.PR].

\bibitem{GJ}
{\sc G. Grimmett and S. Janson} (2003): {\em On smallest triangles}, Rand. Struct. Alg. \textbf{23}, 206--223.

\bibitem{Henze82}
{\sc N. Henze} (1982): {\em The limit distribution for maxima of `weighted' $r$th nearest neighbour distances}, J. Appl. Probab. \textbf{19}, 344--354. 

\bibitem{Henze96}
{\sc N. Henze and T.Klein} (1996): {\em The limit distribution of the largest interpoint distance from a symmetric Kotz sample}, J. Multiv. Anal. \textbf{57}, 228--239. 

\bibitem{HugLastWeil}
{\sc D. Hug, G. Last and W. Weil (2003)}: {\em Distance measurements on processes of flats}, Adv. Appl. Probab. \textbf{35}, 70--95.

\bibitem{Janson}
{\sc S. Janson} (1987): {\em Poisson convergence and Poisson processes with applications to random graphs}, Stoch. Proc. Appl. \textbf{26}, 1--30.

\bibitem{Kallenberg}
{\sc O. Kallenberg} (2002): {\em Foundations of Modern Probability}, Second Edition, Springer, New York.

\bibitem{KMT}
{\sc S. Kanagawa, Y. Mochizuki and H. Tanaka (1992)}: {\em Limit theorems for the minimum interpoint distance between any pair of i.i.d. random points in $\R^d$}, Ann. Inst. Statist. Math. \textbf{44}, 121--131.

\bibitem{LRPeccati}
{\sc R. Lachieze-Rey and G. Peccati (2011)}: {\em Fine Gaussian fluctuations on the Poisson space, I: contractions, cumulants and geometric random graphs}, arXiv:1111.7312 [math.PR].


\bibitem{LM07}
{\sc W. Lao and M. Mayer} (2008): {\em U-max-statistics}, J. Multivar. Anal. \textbf{9}, 2039--2052.

\bibitem{LastPenrose2011}
{\sc G. Last and M.D. Penrose (2011)}: {\em Poisson process Fock space representation, chaos expansion and covariance inequalities}, Probab. Th. Rel. Fields. \textbf{150}, 663--690.

\bibitem{MM07}
{\sc M. Mayer and I. Molchanov} (2007): {\em Limit theorems for the diameter of a random sample in the unit ball}, Extremes \textbf{10}, 151--174. 

\bibitem{Miles}
{\sc R.E. Miles} (1995): {\em A heuristic proof of a long-standing conjecture of D.G. Kendall concerning the shapes of certain large random polygons}, Adv. Appl. Prob. \textbf{27}, 397–-417.

\bibitem{Miller}
{\sc D.R. Miller} (1976): {\em Order statistics, Poisson processes and repairable systems}, J. Appl. Probab. \textbf{31}, 519--529.

\bibitem{Schmidt}
{\sc D. Neuh\"auser, C. Hirsch, C. Gloaguen and V. Schmidt} (2012): {\em On the distribution of typical shortest-path lengths in connected random geometric graphs}, to appear in Queueing Systems. 

\bibitem{NualartVives90}
{\sc D. Nualart and J. Vives (1990)}: {\em Anticipative calculus for the Poisson process based on the Fock space}, in Sem. de Proba. XXIV, LMN \textbf{1426}, 177--1993.

\bibitem{Peccati11}
{\sc G. Peccati (2011)}: {\em The Chen-Stein method for Poisson functionals}, arXiv: 1112.5051 [math.PR].

\bibitem{Peccatietal2010}
{\sc G. Peccati, J.L. Sol\'e, M.S. Taqqu and F. Utzet (2010)}: {\em Stein's method and normal approximation of Poisson functionals}, Ann. Probab. \textbf{38}, 443--478.

\bibitem{PeccatiTaqqu2010}
{\sc G. Peccati and M.S. Taqqu (2010)}: {\em Wiener Chaos: Moments, Cumulants and Diagram Formulae: A survey with computer implementation}, Springer, Berlin.

\bibitem{Penrose2003}
{\sc M.D. Penrose (2003)}: {\em Random Geometric Graphs}, Oxford University Press, Oxford.


\bibitem{R02}
{\sc M. Reitzner} (2002): {\em Random points on the boundary of smooth convex bodies}, Trans. Amer. Math. Soc. \textbf{354}, 2243--2278.

\bibitem{RS11}
{\sc M. Reitzner and M. Schulte} (2011): {\em Central limit theorems for U-statistics of Poisson point processes}, arXiv: 1104.1039 [math.PR].

\bibitem{Resnick}
{\sc S.I. Resnick} (1987): {\em Extreme values, regular variation and point processes}, Springer, New York.

\bibitem{Schneider99}
{\sc R. Schneider} (1999): {\em A duality for Poisson flats}, Adv. Appl. Probab. \textbf{31}, 63--68.

\bibitem{SW}
{\sc R. Schneider and W. Weil} (2008): {\em Stochastic and Integral Geometry}, Springer, Berlin.

\bibitem{S11}
{\sc M. Schulte} (2011): {\em A central limit theorem for the Poisson-Voronoi approximation}, arXiv: 1111.6466 [math.PR].

\bibitem{SilvermanBrown}
{\sc B. Silverman and T. Brown (1978)}: {\em Short distances, flat triangles and Poisson limits}, J. Appl. Probab. \textbf{15}, 815--825.

\bibitem{Surgailis1984}
{\sc D. Surgailis (1984)}: {\em On multiple Poisson stochastic integrals and associated Markov semigroups}, Probab. Math. Statist. \textbf{38}, 217--239.

\end{thebibliography}
\end{document}